\numberwithin{equation}{section}
\theoremstyle{plain}
\newtheorem{theorem}{Theorem}[section]
\newtheorem{remark}[theorem]{Remark}
\newtheorem{lemma}[theorem]{Lemma}
\newtheorem{proposition}[theorem]{Proposition}
\newtheorem{corollary}[theorem]{Corollary}
\begin{document}

\title[Landau equation]
{Analytic Gelfand-Shilov smoothing effect of\\
the spatially homogeneous Landau equation
}

\author[Hao-Guang Li \& Chao-Jiang Xu]
{Hao-Guang Li and Chao-Jiang Xu}

\address{Hao-Guang Li,
\newline\indent
School of Mathematics and Statistics, South-Central Minzu University,
\newline\indent
430074, Wuhan, P. R. China}
\email{lihaoguang@mail.scuec.edu.cn}

\address{Chao-Jiang Xu,
\newline\indent
College of Mathematics and Key Laboratory of Mathematical MIIT,
\newline\indent
Nanjing University of Aeronautics and Astronautics, Nanjing 210016, China
\newline\indent
Universit\'e de Rouen-Normandie, CNRS UMR 6085, Laboratoire de Math\'ematiques
\newline\indent
76801 Saint-Etienne du Rouvray, France
}
\email{xuchaojiang@nuaa.edu.cn}

\date{\today}

\subjclass[2010]{35B65,76P05,82C40}

\keywords{Spatially homogeneous Landau equation, Gelfand-Shilov function space,  hard potential}

\begin{abstract}
In this work, we study the nonlinear spatially homogeneous Landau equation with hard potential in a close-to-equilibrium framework, we show that the solution to the Cauchy problem with $L^2$ initial datum enjoys a analytic Gelfand-Shilov regularizing effect in the class $S^1_1(\mathbb{R}^3)$, meaning that the solution of the Cauchy problem and its Fourier transformation are analytic for any positive time, the evolution of analytic radius is similar to the heat equation.
\end{abstract}

\maketitle


\section{Introduction}\label{S1}
In this work, we study the spatially homogeneous Landau equation
\begin{equation}\label{eq1.10}
\left\{
\begin{array}{ll}
   \partial_t f= Q(f,f),\\
  f|_{t=0}=f_0,
\end{array}
\right.
\end{equation}
where $f=f(t,v)\ge0$ is the density distribution function depending on the variables
$v\in\mathbb{R}^{3}$ and the time $t\geq0$. The Landau bilinear collision operator is given by
\begin{equation}\label{Landau-operaor}
Q(g,f)(v)
=\triangledown_v\cdot\left(\int_{\mathbb{R}^{3}}
\phi(v-v_*)
\big(g(v_*)(\triangledown_vf)(v)-(\triangledown_vg)(v_*)f(v)\big)
dv_*\right),
\end{equation}
where $\phi(v)=(\phi^{ij}(v))_{1\leq\,i,j\leq3}$ stands for the non-negative symmetric matrix
$$
\phi(v)=(|v|^2\textbf{I}_3-v\otimes\,v)|v|^{\gamma}\in\,M_3(\mathbb{R}),\quad\,\gamma\geq-3.
$$
We shall study the fluctuation of the Landau equation \eqref{eq1.10} near the absolute Maxwellian distribution
$$
\mu(v)=(2\pi)^{-\frac 32}e^{-\frac{|v|^{2}}{2}}.
$$
Considering the perturbation of density distribution function
$$
f(t,v)=\mu(v)+\sqrt{\mu}(v)g(t,v),
$$
since $Q(\mu,\mu)=0$, the Cauchy problem \eqref{eq1.10} is reduced to the Cauchy problem
\begin{equation} \label{eq-1}
\left\{ \begin{aligned}
         &\partial_t g+\mathcal{L}g={\bf \Gamma}(g, g),\,\,\, t>0,\, v\in\mathbb{R}^3,\\
         &g|_{t=0}=g_{0},
\end{aligned} \right.
\end{equation}
with $g_0(v)=\mu^{-\frac 12}f_0(v) -\sqrt{\mu}$, where
$$
{\bf \Gamma}(g, g)=\mu^{-\frac 12}Q(\sqrt{\mu}g,\sqrt{\mu}g),\quad
\mathcal{L}g=\mathcal{L}_1g+\mathcal{L}_2g,
$$
and
\begin{equation}\label{K-A}
\mathcal{L}_1g=-\Gamma(\sqrt{\mu},g), \quad \mathcal{L}_2g =-\Gamma(g,\sqrt{\mu})
\end{equation}

We introduce the following Gelfand-Shilov spaces $S^{\mu}_{\nu}(\mathbb{R}^3)$, with $\mu,\,\nu>0,$\,$\mu+\nu\geq1,$\, which is the smooth functions $f\in\,C^{+\infty}(\mathbb{R}^3)$ satisfying:
\begin{equation*}
\exists\, B>0,\, \,
\sup_{v\in\mathbb{R}^3}|v^{\beta}\partial^{\alpha}_vf(v)|\leq\,B^{|\alpha|+
|\beta|+1}(\alpha!)^{\mu}(\beta!)^{\nu},\,\,\forall\,\alpha,\,\beta\in\mathbb{N}^3.
\end{equation*}
This Gelfand-Shilov space can be characterized as the
subspace of Schwartz functions $f\in\,\mathscr{S}(\mathbb{R}^3)$ such that,
$$
\exists\, C>0,\,c_0>0,\,|f(v)|\leq Ce^{-c_0|v|^{\frac{1}{\nu}}},\,\,v\in\mathbb{R}^3\,\,\text{and}\,\,|\hat{f}(\xi)|\leq Ce^{-c_0|\xi|^{\frac{1}{\mu}}},\,\,\xi\in\mathbb{R}^3,
$$
where $c_0$ is called the Gelfand-Shilov radius. $S^{1}_{1}(\mathbb{R}^3)$ is called analytic Gelfand-Shilov space.

\bigskip
The existence, uniqueness of the solution to Cauchy problem for the spatially homogeneous Landau equation has already been treated in \cite{DV},\cite{Villani1998-2} under rather weak assumption on the initial datum. Moreover, in the hard potential case, they prove the smoothness of the solution in
$C^\infty(]0, +\infty[; \mathcal{S}(\mathbb{R}^3))$.  In \cite{ChenLiXu6}, Chen-Li-Xu improve this smoothing property and prove that the solution is in fact analytic for any $t>0$ (See  \cite{ChenLiXu3,ChenLiXu5} for the Gevrey regularity). For the analytic smoothing effect, we can also refer to \cite{LX4} and \cite{MX}.

In the Maxwellian molecules case, in \cite{NYKC2}, Lerner,  Morimoto, Pravda-Starov and Xu study the spatially homogeneous non-cutoff Boltzmann equation and Landau equation in a close-to-equilibrium framework and show that the solution enjoys the Gelfand-Shilov smoothing effect (see also \cite{Li2}, \cite{LX-3} and \cite{MPX}). This implies that the nonlinear spatial homogeneous Landau equation has the same smoothing effect properties as the classic heat equation or harmonic oscillators heat equation.  In addition,  starting from a $L^2$ initial datum at $t=0$, the solution of Cauchy problem is spatial analytic for any $t>0$ and the analytic radius is $c_0 t^{\frac 12}$.  In the non-Maxwellian case, we can't use the Fourier transformation and spectral decomposition as in \cite{NYKC2,Li2,LX-3,MPX}.

Now we define the creation and annulation operators,
\begin{equation}\label{H3}
A_{\pm,k}=\frac{v_{k}}{2}\mp\partial_k,\quad 1\leq k\leq 3,
\end{equation}
and
$$
A^{\alpha}_{+}=A^{\alpha_1}_{+,1}A^{\alpha_2}_{+,2}A^{\alpha_3}_{+,3},\quad A^{\alpha}_{-}=A^{\alpha_1}_{-,1}A^{\alpha_2}_{-,2}A^{\alpha_3}_{-,3},\quad \alpha\in\mathbb{N}^3.
$$
Moreover, we  define that the gradient of $\mathcal{H}$ as follows
\begin{equation}\label{gradientH}
\nabla_{\mathcal{H}_+}=(A_{+,1}, A_{+,2}, A_{+,3}),\quad \nabla_{\mathcal{H}_-}=(A_{-,1}, A_{-,2}, A_{-,3})
\end{equation}
and then define the norm, for $m\ge1$,
\begin{equation}\label{multi-indices}
\|\nabla_{\mathcal{H}_+}^{m}u\|^2_{L^{2}(\mathbb{R}^{3})}=\sum^3_{k=1}\|A_{+,k}\nabla_{\mathcal{H}_+}^{m-1}u\|^2_{L^{2}(\mathbb{R}^{3})}=\sum_{|\alpha|=m}\frac{m!}{\alpha!}\|A^{\alpha}_{+}u\|^2_{L^{2}(\mathbb{R}^{3})}.
\end{equation}
Where for the multi-indices, we use the notation from the binomial expansion
\begin{align*}
|\xi|^{2m}=(\xi_1^2+\xi_2^2+\xi_3^2)^m=\sum_{|\alpha|=m}\frac{m!}{\alpha!}\xi_1^{2\alpha_1}\xi_2^{2\alpha_2}\xi_3^{2\alpha_3}.
\end{align*}
The main theorem of this paper is the following analytic Gelfand-Shilov smoothing effect of a smooth solution of the Cauchy problem \eqref{eq-1}.

\begin{theorem}\label{trick}
Let $g$ be a smooth solution of the Cauchy problem \eqref{eq-1} with $\gamma\ge 0$,  and there exist a positive constant $\epsilon_0>0$ small enough, such that
\begin{equation}\label{1.7+01}
\|g\|_{L^{\infty}([0, +\infty[; \, L^2(\mathbb{R}^3))}\leq \epsilon_0.
\end{equation}
Then, there exists $C>0$ such that for any $m\in \mathbb{N}$,
we have
\begin{equation}\label{1.7+1}
\|\tilde{t}^{\frac{m}{2}}\nabla_{\mathcal{H}_+}^{m}g\|_{L^{\infty}([0, +\infty[; \, L^2(\mathbb{R}^3))}\leq C^{m+1}m!,
\end{equation}
where $\tilde{t}=\min(t,1)$.
\end{theorem}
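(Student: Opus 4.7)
The approach is induction on $m$ via a weighted $L^2$ energy estimate, with the factor $\tilde t^{m/2}$ absorbing the initial singularity of the $m$-th derivative. The base case $m=0$ is the hypothesis \eqref{1.7+01}. For the inductive step, I apply $A_+^\alpha$ with $|\alpha|=m+1$ to \eqref{eq-1}, test against $\tilde t^{m+1}A_+^\alpha g$, sum over $|\alpha|=m+1$ with the multinomial weights $\frac{(m+1)!}{\alpha!}$ (matching the definition of $\|\nabla_{\mathcal{H}_+}^{m+1}\cdot\|$), and integrate in time. The time-derivative piece produces
\begin{equation*}
\tfrac12\|\tilde t^{(m+1)/2}\nabla_{\mathcal{H}_+}^{m+1}g(T)\|_{L^2}^2 - \tfrac{m+1}{2}\int_0^T \tilde t'\,\tilde t^m\|\nabla_{\mathcal{H}_+}^{m+1}g\|_{L^2}^2\,dt,
\end{equation*}
in which the second term is controlled by the inductive hypothesis at level $m+1$ applied on $[0,1]$, since $\tilde t'=0$ for $t\ge 1$.

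The linear part $\langle \mathcal{L}g,A_+^\alpha g\rangle$ yields, after commuting $A_+^\alpha$ through $\mathcal L$, a coercivity contribution $\gtrsim\|\langle v\rangle^{\gamma/2}\nabla_{\mathcal{H}_+}^{m+2}g\|_{L^2}^2$ proper to the hard-potential regime $\gamma\ge 0$, plus a commutator $[A_+^\alpha,\mathcal{L}]g$. Using the Weyl algebra relations $[A_{\pm,j},A_{\pm,k}]=0$ and $[A_{+,j},A_{-,k}]=\delta_{jk}$ together with the explicit structure \eqref{K-A} of $\mathcal{L}_1,\mathcal{L}_2$, this commutator produces only strictly lower-order terms in $\nabla_{\mathcal{H}_+}^{\le m+1}g$ that are absorbed either by the leading dissipation or by the inductive hypothesis. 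For the nonlinear term one uses the Leibniz expansion
\begin{equation*}
A_+^\alpha {\bf \Gamma}(g,g) = \sum_{\beta\le\alpha}\binom{\alpha}{\beta}{\bf \Gamma}(A_+^\beta g, A_+^{\alpha-\beta} g) + R_\alpha,
\end{equation*}
together with a weighted trilinear estimate of the form $|\langle {\bf \Gamma}(u,v),w\rangle|\lesssim \|u\|_{L^2}\|\langle v\rangle^{\gamma/2}\nabla_{\mathcal{H}_+}v\|_{L^2}\|\langle v\rangle^{\gamma/2}\nabla_{\mathcal{H}_+}w\|_{L^2}$, and symmetric variants placing the $L^2$ factor on either slot.

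The endpoint contributions $\beta=0$ and $\beta=\alpha$ carry the factor $\|g\|_{L^2}\le\epsilon_0$ and are absorbed by the leading dissipation once $\epsilon_0$ is taken small; the interior terms are handled by distributing the time weight as $\tilde t^{(m+2)/2}=\tilde t^{|\beta|/2}\cdot \tilde t^{(m+1-|\beta|)/2}\cdot\tilde t^{1/2}$ and invoking the inductive hypothesis on each factor. Closing the recursion with constant $C^{m+1}(m+1)!$ relies on the combinatorial inequality
\begin{equation*}
\sum_{\beta\le\alpha}\binom{\alpha}{\beta}\,|\beta|!\,|\alpha-\beta|! \;\lesssim\;(m+1)\cdot(m+1)!,
\end{equation*}
combined with the multinomial identity behind \eqref{multi-indices}. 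The principal obstacle is precisely this bookkeeping of factorial constants: the trilinear estimate must be in a form that permits an $L^2$ factor so that $\epsilon_0$ can be exploited at the top order, while $\gamma\ge 0$ is used throughout to guarantee that $\langle v\rangle^\gamma$ weights coming from $\mathcal{L}$ and ${\bf \Gamma}$ always dominate rather than compete with the commutator remainders, so that no derivative loss accumulates through the induction.
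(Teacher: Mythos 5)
Your overall strategy---a weighted $L^2$ energy estimate with the weight $\tilde t^{m/2}$, induction on the number of $A_+$ derivatives, a Leibniz expansion of the nonlinearity, and the smallness $\epsilon_0$ absorbing the top-order terms into the dissipation---is the same as the paper's. But as written the induction does not close, and the obstruction is concrete. Your induction hypothesis is only the $L^\infty_tL^2$ bound \eqref{1.7+1}. The term produced by differentiating the time weight, $(m+1)\int_0^T\tilde t^{\,m}\|\nabla^{m+1}_{\mathcal H_+}g\|_{L^2}^2\,dt$, cannot be controlled by that hypothesis: invoking it ``at level $m+1$'' is circular (that is the statement being proved), and even granting it, the bound $\|\nabla^{m+1}_{\mathcal H_+}g(t)\|_{L^2}^2\lesssim t^{-(m+1)}C^{2m+4}((m+1)!)^2$ leaves the divergent integral $\int_0^1 t^{-1}\,dt$. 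The paper closes this by strengthening the induction hypothesis to include the space-time dissipation, $\int_0^{t}\||\tau^{n/2}\nabla^n_{\mathcal H_+}g(\tau)|\|_{\sigma}^{2}d\tau\leq C^{2n}((n-1)!)^{2}$ (Proposition \ref{hard}), and by using the coercivity \eqref{definition3} to trade one derivative for the $\sigma$-norm: $\|\nabla^{m+1}_{\mathcal H_+}g\|_{L^2}^2\le\|\langle v\rangle^{\gamma/2}\nabla_{\mathcal H_+}\nabla^{m}_{\mathcal H_+}g\|_{L^2}^2\lesssim\||\nabla^{m}_{\mathcal H_+}g|\|_{\sigma}^2$, so the bad term is paid for by the level-$m$ dissipation integral (the estimate of $M_1$). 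The same strengthened hypothesis is indispensable for your interior nonlinear terms, where the trilinear estimate places an extra gradient on one factor and the $L^\infty_tL^2$ hypothesis alone cannot reach it.

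There is a second, quantitative gap in the bookkeeping. Your combinatorial inequality $\sum_{\beta\le\alpha}\binom{\alpha}{\beta}|\beta|!\,|\alpha-\beta|!\lesssim(m+1)\cdot(m+1)!$ loses a factor of order $m$ relative to $(m+1)!$; feeding two copies of the hypothesis $C^{n+1}n!$ into it forces $C\gtrsim m$ at step $m$, so no uniform constant closes the recursion. The paper avoids this by running the induction on $((n-1)!)^2$ rather than $(n!)^2$ and, crucially, by exploiting that the coefficients generated by $\sqrt{\mu}$ grow only like $\sqrt{k!}$ (since $A_+^{\beta}\sqrt{\mu}=\sqrt{\beta!}\,\psi_{\beta}$), so the relevant normalized sums such as $\sum_{k}C^{-k+1}m/(\sqrt{k!}\,(m-k))$ are bounded by absolute constants. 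This $\sqrt{k!}$ gain is available precisely because of a structural fact you replace with an unspecified remainder $R_\alpha$: since $\partial_j(\sqrt{\mu}F)=-\sqrt{\mu}A_{+,j}F$, the Leibniz formula for $\nabla_{\mathcal H_+}$ acting on $(\phi^{ij}*(\sqrt{\mu}F))G$ is exact with no remainder (Lemma \ref{leibniz}), and every derivative falling on the convolution lands as a creation operator on the $\sqrt{\mu}$-weighted slot. Without the strengthened dissipation hypothesis and this factorial gain, the argument as proposed does not yield \eqref{1.7+1}.
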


\begin{remark}
We will prove, in Appendix \ref{Appendix}, that the estimates \eqref{1.7+1} implies $g(t)\in S^1_1(\mathbb{R}^3)$ for any $t>0$, and the analytical Gelfand-Shilov radius is $c_0t^{\frac 12}$ for $0\le t\le 1$. So that we extend the results of \cite{Li2}, \cite{LX-3} and \cite{MPX} to the hard potential case,
and show that the nonlinear Cauchy problem
\eqref{eq-1} enjoys the same smoothing effect as the following Cauchy problem
\[
\left\{ \begin{aligned}
         &\partial_t f- \langle v\rangle^{\gamma}\left(\triangle_v-\frac{|v|^2}{4}\right)f=0,\,\,\, t>0,\, v\in\mathbb{R}^3,\\
         &f|_{t=0}=f_{0},
\end{aligned} \right.
\]
with $\gamma\ge 0$, we want to point out that this is a uniformly parabolic problem.
\end{remark}

This paper is arranged as follows:  In Section \ref{S2}, we introduce a new expansion of the linear and nonlinear Landau operators.  Then we give the spectral analysis on the Landau operators and prove a fundamental trilinear estimate for the nonlinear Landau operator.
In Section \ref{S3},  we present a new kind of Leibniz formula. By using this formula, the trilinear estimate of the nonlinear Landau operators with gradient of $\mathcal{H}_+$ will be given.
In Section \ref{S4},  we study the coercivity for the linear Landau operator, which is crucial in the proof of Gelfand-Shilov smoothing effect for the weak solution of the Cauchy problem \eqref{eq-1}.
On the basis of the preparatory estimate, the main theorem \ref{trick} of the Gelfand-Shilov smoothing effect will be proved in Section \ref{S5}.
In the Appendix \ref{Appendix}, we introduce the Hermite operator and Gelfand-Shilov space.  Moreover, we prove in the Appendix,  the estimate \eqref{1.7+1} implies that the solution $g$ to the Cauchy problem \eqref{eq-1} enjoys the Gelfand-Shilov $S^1_1(\mathbb{R}^3)$ smoothing effect.

\section{Analysis of the Landau operators}\label{S2}
In this section, we introduce the representations of linear Landau operator and nonlinear Landau operator. Then we present the preparation Lemmas for the estimate on Landau operators.

Similar to the computation of Lemma 1 in \cite{Guo-2002}, we have the following Lemma.
\subsection*{Representations of Landau operators}
\begin{lemma}\label{detail}
We have the following representations for $\mathcal{L}_1$, $\mathcal{L}_2$ and $\Gamma$:
\begin{equation}\label{Gamma}
\begin{split}
&\mathcal{L}_1g=\sum^3_{i,j=1}A_{+,i}\{(\phi^{ij}*\mu)A_{-,j}g\},\\
&\mathcal{L}_2f=-\sum^3_{i,j=1}A_{+,i}\{\sqrt{\mu}(\phi^{ij}*(\sqrt{\mu}A_{-,j} f))\},\\
&\Gamma(f,g)=\sum^3_{i,j=1}A_{+,i}\{(\phi^{ij}*(\sqrt{\mu}f))A_{+,j}g\}-\sum^3_{i,j=1}A_{+,i}\{(\phi^{ij}*
(\sqrt{\mu}A_{+,j} f))g\}\\
&\qquad\quad=\Gamma_1(f,g)+\Gamma_2(f,g).
\end{split}
\end{equation}
\end{lemma}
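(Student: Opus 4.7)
The plan is to do a direct computation from the definitions, built on two elementary facts. The first is the pair of conjugation identities
\begin{equation*}
\partial_{v_j}(\mu^{1/2} h) = -\mu^{1/2} A_{+,j} h, \qquad \partial_{v_j}(\mu^{-1/2} h) = \mu^{-1/2} A_{-,j} h,
\end{equation*}
which follow immediately from $\partial_j\mu^{\pm 1/2} = \mp \tfrac{v_j}{2}\mu^{\pm 1/2}$ combined with the Leibniz rule. The second is the classical projection property of the matrix $\phi$, namely $\sum_j w_j \phi^{ij}(w) = 0$ for each $i$; applied to $w = v - v_*$ it yields
\begin{equation*}
\sum_j v_j \phi^{ij}(v-v_*) = \sum_j v_{*,j}\phi^{ij}(v-v_*).
\end{equation*}

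For the trilinear formula $\Gamma(f,g) = \mu^{-1/2}Q(\mu^{1/2}f, \mu^{1/2}g)$, I would substitute $\mu^{1/2}f$ and $\mu^{1/2}g$ into \eqref{Landau-operaor}. The first conjugation identity rewrites both gradients inside the integral as creation operators applied to $f,g$, producing factors $\mu^{1/2}(v)$ and $\mu^{1/2}(v_*)$ respectively. Pulling $\mu^{1/2}(v)$ and $A_{+,j}g(v)$ outside the $v_*$-integration, the two remaining integrals are exactly $\phi^{ij}*(\mu^{1/2}f)$ and $\phi^{ij}*(\mu^{1/2}A_{+,j}f)$. Applying the outer divergence $\nabla_v\cdot$ and invoking the conjugation identity once more (so that $\partial_{v_i}[\mu^{1/2}\,\cdot\,] = -\mu^{1/2}A_{+,i}[\,\cdot\,]$) gives one more $\mu^{1/2}$ factor, which cancels with the $\mu^{-1/2}$ prefactor of $\Gamma$ and leaves precisely $\Gamma_1+\Gamma_2$.

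The formulas for $\mathcal{L}_1 g = -\Gamma(\sqrt{\mu},g)$ and $\mathcal{L}_2 f = -\Gamma(f,\sqrt{\mu})$ require one extra step. Specializing $f=\sqrt{\mu}$ and using $\nabla\mu = -v\mu$, the $i$-th component of the integrand in $Q(\mu, \mu^{1/2}g)$ becomes a sum over $j$ of $\phi^{ij}(v-v_*)\mu(v_*)\bigl[\partial_{v_j}(\mu^{1/2}g)(v) + v_{*,j}\mu^{1/2}(v)g(v)\bigr]$. Here the projection identity intervenes: after summing over $j$, the factor $v_{*,j}$ may be swapped for $v_j$, and the combination $\partial_{v_j}(\mu^{1/2}g) + v_j\mu^{1/2}g = -\tfrac{v_j}{2}\mu^{1/2}g + \mu^{1/2}\partial_j g + v_j\mu^{1/2}g$ collapses to $\mu^{1/2}A_{-,j}g$. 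This is precisely the step that transmutes a creation operator into an annihilation operator, and applying $\nabla_v\cdot$ once more produces the claimed expression for $\mathcal{L}_1 g$. The derivation of the $\mathcal{L}_2 f$ formula is parallel: one specializes $g=\sqrt{\mu}$ in $Q(\mu^{1/2}f,\mu)$, uses the projection identity in the opposite direction to trade $v_j$ for $v_{*,j}$, and observes that $-v_{*,j}f(v_*) + A_{+,j}f(v_*) = -A_{-,j}f(v_*)$.

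The calculation is mechanical but sign-sensitive, and the only genuinely substantive step is the projection property of $\phi$: without it, the linearized operator would carry $A_{+,j}$ in place of $A_{-,j}$, and the symmetric Hermite-type structure underpinning the coercivity estimates of Section~\ref{S4} would be lost. The argument follows closely the identity-level manipulation carried out in Lemma~1 of \cite{Guo-2002}.
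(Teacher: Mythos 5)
Your proposal is correct and follows essentially the same route as the paper: both rest on the conjugation identities $\partial_j(\sqrt{\mu}\,h)=-\sqrt{\mu}\,A_{+,j}h$ and $\mu^{-1/2}\partial_i F=-A_{+,i}(\mu^{-1/2}F)$ together with the null property $\sum_j\phi^{ij}(v-v_*)(v_j-v_{*,j})=0$, which is exactly the mechanism the paper uses to convert $A_{+,j}$ into $A_{-,j}$ in the linearized terms. All signs and operator identities in your sketch check out, so nothing further is needed.
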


\begin{proof}
It is well known that $Q(\mu,\mu)=0$.  By expanding $Q(\mu+\sqrt{\mu}f,\mu+\sqrt{\mu}g)$ around $\mu$, we have
\begin{align*}
Q(\mu+\sqrt{\mu}f,\mu+\sqrt{\mu}g)&=Q(\sqrt{\mu}f,\mu)
+Q(\mu,\sqrt{\mu}g)+Q(\sqrt{\mu}f,\sqrt{\mu}g)\\
&=\sqrt{\mu}\{-\mathcal{L}_2f-\mathcal{L}_1g+\Gamma(f,g)\}.
\end{align*}
Notice that
\begin{align}
&\partial_j(\sqrt{\mu}f)=\sqrt{\mu}(\partial_j-\frac{v_j}{2})f=-\sqrt{\mu}A_{+,j}f,\label{creat-1}\\
&\sqrt{\mu}^{-1}\partial_i F=(\partial_i-\frac{v_i}{2})(\sqrt{\mu}^{-1}F)=-A_{+,i}(\sqrt{\mu}^{-1}F),\label{creat-2}
\end{align}
and for any fixed $i$ or $j$,
\[
\sum^3_{i=1}\phi^{ij}(v-v')(v_i-v'_i)=\sum^3_{j=1}\phi^{ij}(v-v')(v_j-v'_j)=0.
\]
We obtain from \eqref{Landau-operaor}, \eqref{creat-1} and \eqref{creat-2} that
\begin{align*}
\mathcal{L}_2f&=-\sqrt{\mu}^{-1}Q(\sqrt{\mu}f,\mu)\\
&=\sum^3_{i,j=1}\sqrt{\mu}^{-1}\partial_i\{v_j\mu(\phi^{ij}*(\sqrt{\mu}f))\}
+\sum^3_{i,j=1}\sqrt{\mu}^{-1}\partial_i\{\mu(\phi^{ij}*(\sqrt{\mu}A_{+,j} f))\}\\
&=-\sum^3_{i,j=1}A_{+,i}\{\sqrt{\mu}(\phi^{ij}*(v_j\sqrt{\mu}f))
-\sum^3_{i,j=1}\sqrt{\mu}(\phi^{ij}*(\sqrt{\mu}A_{+,j}f))\}\\
&=-\sum^3_{i,j=1}A_{+,i}\{\sqrt{\mu}(\phi^{ij}*(\sqrt{\mu}A_{-,j} f))\}.
\end{align*}
For $\mathcal{L}_1g$, using \eqref{creat-1} and \eqref{creat-2}, we have
\begin{align*}
\mathcal{L}_1g&=-\sqrt{\mu}^{-1}Q(\mu,\sqrt{\mu}g)\\
&=\sum^3_{i,j=1}\sqrt{\mu}^{-1}\partial_i\{(\phi^{ij}*\mu)\sqrt{\mu}A_{+,j}g\}+\sum^3_{i,j=1}\sqrt{\mu}^{-1}
\partial_i\{(\phi^{ij}*(v_j\mu ))\sqrt{\mu}g\}\\
&=-\sum^3_{i,j=1}A_{+,i}\{(\phi^{ij}*\mu)A_{+,j}g-(\phi^{ij}* \mu)v_j g\}\\
&=\sum^3_{i,j=1}A_{+,i}\{(\phi^{ij}*\mu)A_{-,j}g\}.
\end{align*}
Finally, from \eqref{Landau-operaor}, \eqref{creat-1} and \eqref{creat-2}, we have
\begin{align*}
\Gamma(f,g)
&=\sqrt{\mu}^{-1}Q(\sqrt{\mu}f,\sqrt{\mu}g)\\
&=-\sum^3_{i,j=1}\sqrt{\mu}^{-1}\partial_i\{(\phi^{ij}*(\sqrt{\mu}f))\sqrt{\mu}A_{+,j}g\}\\
&\quad+\sum^3_{i,j=1}\sqrt{\mu}^{-1}\partial_i\{(\phi^{ij}*(\sqrt{\mu}A_{+,j} f))\sqrt{\mu}g\}\\
&=\sum^3_{i,j=1}A_{+,i}\{(\phi^{ij}*(\sqrt{\mu}f))A_{+,j}g\}-\sum^3_{i,j=1}A_{+,i}\{(\phi^{ij}*(\sqrt{\mu}A_{+,j} f))g\}\\
&=\Gamma_1(f,g)+\Gamma_2(f,g).
\end{align*}
We end the calculation of Lemma \ref{detail}.
\end{proof}

\begin{remark} For $\gamma=0$,directly computation shows that
$$
\phi^{jj} * \mu=\int_{\mathbb{R}^3}\Big((v_2-w_2)^2+(v_3-w_3)^2\Big)\mu(w)dw=|v|^2-|v_j|^2-2,
$$
and for $i\neq j$,
$$
\phi^{ij}*\mu=-\int_{\mathbb{R}^3}(v_i-w_i)(v_j-w_j)\mu(w)dw=-v_iv_j.
$$
Then it follows that
\begin{align*}
\mathcal{L}_1g=2\Big(-\Delta+\frac{|v|^2}{4}-\frac{3}{2}\Big)g-\Delta_{\mathbb{S}^2}g,
\end{align*}
where
$$\Delta_{\mathbb{S}^2}=\frac{1}{2}\sum_{\substack{1\leq i,j\le3\\i\neq j}}\Big(v_i\partial_j-v_j\partial_i\Big)^2.$$
This is consistent with Proposition 2.1 in \cite{NYKC2}, we can also refer to \cite{LX-3}.
\end{remark}

\subsection*{The estimation of nonlinear Landau operators}

For the matrix $\phi$ defined in \eqref{Landau-operaor}, we denote
\begin{equation}\label{sigma}
\sigma^{ij}=\phi^{ij}*\mu,\quad
\sigma^{i}=\sum^3_{j=1}\phi^{ij}*(v_j\mu)
\end{equation}
and define, for $g\in\mathcal{S}(\mathbb{R}^3)$,
\begin{equation}\label{L2A}
\||g|\|^2_{\sigma}=\sum_{i,j=1}^{3}\int_{\mathbb{R}^3}
\Big(\sigma^{ij}\partial_ig\partial_jg+\frac{1}{4}\sigma^{ij}v_iv_jg^2\Big)dv.
\end{equation}

For any vector-valued function $G(v)=(G_{1},G_{2},G_{3})$, we define the projection to the vector $v=(v_{1},v_{2},v_{3})$ as
$$
\mathbf{P}_{v}G=\{\sum_{j=1}^{3}G_{j}v_{j}\}\frac{v}{| v|^{2}}.
$$
Recall the definition \eqref{gradientH}, we have
\begin{proposition}\label{norm} For the norm of \eqref{L2A}, we have
\begin{equation}\label{definition2}
\||g|\|^2_{\sigma}\geq C_{1}\Big(\|\langle v\rangle^{\frac{\gamma}{2}}\nabla g\|^2_{L^2(\mathbb{R}^3)}+\|\langle v\rangle^{\frac{\gamma+2}{2}}g\|^2_{L^2(\mathbb{R}^3)}\Big),
\end{equation}
and
\begin{equation}\label{definition3}
\begin{split}
 \||g|\|^2_{\sigma}\geq &C_{1}\Big(\|\langle v\rangle^{\frac{\gamma}{2}}\mathbf{P}_v\nabla_{\mathcal{H}_{\pm}}g\|^2_{L^2(\mathbb{R}^3)}+\|\langle v\rangle^{\frac{\gamma+2}{2}}(\mathbf{I}-\mathbf{P}_v)\nabla_{\mathcal{H}_{\pm}}g\|^2_{L^2(\mathbb{R}^3)}\Big)\\
\geq& C_{1}\|\langle v\rangle^{\frac{\gamma}{2}}\nabla_{\mathcal{H}_{\pm}}g\|^2_{L^2(\mathbb{R}^3)}.
\end{split}
\end{equation}
\end{proposition}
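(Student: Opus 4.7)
The plan is to reduce both bounds in Proposition \ref{norm} to the explicit spectral structure of the symmetric matrix $\sigma(v)=(\sigma^{ij}(v))$ defined in \eqref{sigma}. Since $\mu$ is isotropic and $\phi(u)$ transforms covariantly under rotations, one checks directly that $\sigma(Rv)=R\,\sigma(v)\,R^{T}$ for every orthogonal $R$; the only rank-$2$ tensors constructible from a single vector in $\mathbb{R}^{3}$ are combinations of $\mathbf{I}$ and $v\otimes v/|v|^{2}$, so $\sigma(v)$ is diagonalized in the basis $\{v/|v|\}\cup(v/|v|)^{\perp}$ with a simple eigenvalue $\lambda_{1}(v)$ in the radial direction and a double eigenvalue $\lambda_{2}(v)$ perpendicular to $v$. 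A direct Gaussian-convolution computation (as in \cite{Guo-2002}, of which the $\gamma=0$ case is the identity in the remark after Lemma \ref{detail}) gives, for $\gamma\ge 0$,
\[
\lambda_{1}(v)\simeq\langle v\rangle^{\gamma},\qquad\lambda_{2}(v)\simeq\langle v\rangle^{\gamma+2},\qquad \sum_{i,j}\sigma^{ij}(v)v_{i}v_{j}=\lambda_{1}(v)|v|^{2}\simeq\langle v\rangle^{\gamma+2}.
\]

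For \eqref{definition2}, I decompose $\nabla g=\mathbf{P}_{v}\nabla g+(\mathbf{I}-\mathbf{P}_{v})\nabla g$; the spectral decomposition then yields the pointwise identity
\[
\sum_{i,j}\sigma^{ij}\,\partial_{i}g\,\partial_{j}g=\lambda_{1}(v)\,|\mathbf{P}_{v}\nabla g|^{2}+\lambda_{2}(v)\,|(\mathbf{I}-\mathbf{P}_{v})\nabla g|^{2}.
\]
Combined with $\tfrac{1}{4}\sum\sigma^{ij}v_{i}v_{j}g^{2}\simeq\langle v\rangle^{\gamma+2}g^{2}$, this produces the refined lower bound
\[
\||g|\|^{2}_{\sigma}\ge C\bigl(\|\langle v\rangle^{\gamma/2}\mathbf{P}_{v}\nabla g\|^{2}_{L^{2}}+\|\langle v\rangle^{(\gamma+2)/2}(\mathbf{I}-\mathbf{P}_{v})\nabla g\|^{2}_{L^{2}}+\|\langle v\rangle^{(\gamma+2)/2}g\|^{2}_{L^{2}}\bigr),
\]
and dropping $\langle v\rangle^{(\gamma+2)/2}$ to $\langle v\rangle^{\gamma/2}$ on the perpendicular piece together with $|\nabla g|^{2}=|\mathbf{P}_{v}\nabla g|^{2}+|(\mathbf{I}-\mathbf{P}_{v})\nabla g|^{2}$ gives \eqref{definition2}.

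For \eqref{definition3}, I exploit the algebraic identity $\nabla_{\mathcal{H}_{\pm}}g=\tfrac{v}{2}g\mp\nabla g$. Since $\mathbf{P}_{v}v=v$, this splits as
\[
\mathbf{P}_{v}\nabla_{\mathcal{H}_{\pm}}g=\tfrac{v}{2}g\mp\mathbf{P}_{v}\nabla g,\qquad(\mathbf{I}-\mathbf{P}_{v})\nabla_{\mathcal{H}_{\pm}}g=\mp(\mathbf{I}-\mathbf{P}_{v})\nabla g.
\]
Because $\langle v\rangle^{\gamma}|v|^{2}\lesssim\langle v\rangle^{\gamma+2}$, the triangle inequality yields
\[
\|\langle v\rangle^{\gamma/2}\mathbf{P}_{v}\nabla_{\mathcal{H}_{\pm}}g\|^{2}\lesssim\|\langle v\rangle^{\gamma/2}\mathbf{P}_{v}\nabla g\|^{2}+\|\langle v\rangle^{(\gamma+2)/2}g\|^{2},
\]
while the $(\mathbf{I}-\mathbf{P}_{v})$-components of $\nabla_{\mathcal{H}_{\pm}}g$ and $\nabla g$ coincide. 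All three terms on the right are controlled by the refined lower bound from the previous step, which proves the first inequality in \eqref{definition3}. The second inequality is immediate from $\langle v\rangle^{\gamma}\le\langle v\rangle^{\gamma+2}$ and $|\nabla_{\mathcal{H}_{\pm}}g|^{2}=|\mathbf{P}_{v}\nabla_{\mathcal{H}_{\pm}}g|^{2}+|(\mathbf{I}-\mathbf{P}_{v})\nabla_{\mathcal{H}_{\pm}}g|^{2}$.

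The main obstacle is the spectral asymptotics for $\lambda_{1},\lambda_{2}$; everything else is algebraic manipulation, and once those pointwise estimates on the eigenvalues of $\sigma(v)$ are in hand, both estimates of the proposition follow by projecting onto the radial and tangential components and applying the triangle inequality.
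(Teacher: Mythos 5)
Your overall architecture --- the rotational covariance of $\sigma$, the radial/tangential eigenvalues $\lambda_{1}\simeq\langle v\rangle^{\gamma}$, $\lambda_{2}\simeq\langle v\rangle^{\gamma+2}$, the identity $(\mathbf{I}-\mathbf{P}_{v})\nabla_{\mathcal{H}_{\pm}}g=\mp(\mathbf{I}-\mathbf{P}_{v})\nabla g$, and the reduction of \eqref{definition3} to the refined lower bound --- is the same as the paper's, which simply quotes formula (21) of Corollary 1 in \cite{Guo-2002} for the refined bound and then runs the identical projection algebra. But there is one step in your derivation of that refined bound that fails as written: the claim
\[
\sum_{i,j}\sigma^{ij}(v)v_{i}v_{j}=\lambda_{1}(v)|v|^{2}\simeq\langle v\rangle^{\gamma+2}.
\]
The left-hand side vanishes at $v=0$ (it is $\lambda_{1}(v)|v|^{2}\sim|v|^{2}$ near the origin), while $\langle v\rangle^{\gamma+2}\ge 1$ there, so the pointwise lower bound is false on any neighbourhood of the origin. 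Consequently the zeroth-order term $\|\langle v\rangle^{(\gamma+2)/2}g\|^{2}_{L^{2}}$ in your ``refined lower bound'' cannot be extracted from $\int\frac{1}{4}\sum\sigma^{ij}v_{i}v_{j}g^{2}\,dv$ alone; it is precisely the nontrivial content of Guo's Corollary 1 that this term is nevertheless controlled by $\||g|\|^{2}_{\sigma}$, and the proof there has to borrow from the gradient part of \eqref{L2A} near $v=0$ (a Poincar\'e-type argument: control $\|g\|_{L^{2}(|v|\le 1)}$ by $\|g\|_{L^{2}(1\le|v|\le 2)}+\|\nabla g\|_{L^{2}(|v|\le 2)}$, the exterior region being where the pointwise bound does hold). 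This gap matters downstream, since you use exactly that zeroth-order term to absorb $\|\langle v\rangle^{\gamma/2}\tfrac{v}{2}g\|^{2}$ when bounding $\mathbf{P}_{v}\nabla_{\mathcal{H}_{\pm}}g$.

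Once the refined lower bound is either repaired by such an interior Poincar\'e argument or simply cited from \cite{Guo-2002} (as the paper does), the remainder of your argument is correct. The only residual difference from the paper is stylistic: the paper first symmetrizes the norm as $\||g|\|^{2}_{\sigma}=\tfrac12\sum_{i,j}\int\sigma^{ij}(A_{-,i}gA_{-,j}g+A_{+,i}gA_{+,j}g)\,dv$ and applies the anisotropic quadratic-form bound directly to $A_{\pm}g$, which yields \eqref{definition3} with the same constant $C_{1}$, whereas your route through $\nabla_{\mathcal{H}_{\pm}}g=\tfrac{v}{2}g\mp\nabla g$ and the triangle inequality costs a harmless factor of $2$ in the constant.
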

\begin{proof}
Simple calculation shows that,
\begin{align*}
\||g|\|^2_{\sigma}=\frac{1}{2}\sum_{i,j=1}^{3}\int_{\mathbb{R}^3}\sigma^{ij}\left(A_{-,i}gA_{-,j}g+A_{+,i}gA_{+,j}g\right)dv.
\end{align*}
From formula $(21)$ of Corollary 1 in \cite{Guo-2002}, there exist $C_{1}>0$, such that
\begin{equation*}
 \|| g|\|_{\sigma}^{2}\geq C_{1}\Big(\|\langle v\rangle^{\frac{\gamma}{2}}\mathbf{P}_v\nabla g\|^2_{L^2(\mathbb{R}^3)}
 +\|\langle v\rangle^{\frac{\gamma+2}{2}}(\mathbf{I}-\mathbf{P}_v)\nabla g\|^2_{L^2(\mathbb{R}^3)}+\|\langle v\rangle^{\frac{\gamma+2}{2}}g\|^2_{L^2(\mathbb{R}^3)}\Big).
\end{equation*}
Notice that $\nabla f=\mathbf{P}_{v}\nabla f+(\mathbf{I}-\mathbf{P}_{v})\nabla f$, we have then
$$
\||g|\|^2_{\sigma}\geq C_{1}\Big(\|\langle v\rangle^{\frac{\gamma}{2}}\nabla g\|^2_{L^2(\mathbb{R}^3)}+\|\langle v\rangle^{\frac{\gamma+2}{2}}g\|^2_{L^2(\mathbb{R}^3)}\Big).
$$
Moreover, from the definition of  $\mathbf{P}_{v}$, one can find that
$$\mathbf{P}_v(v_ig)=\sum_{j=1}^{3}(v_{j}g)v_{j}\frac{v_{i}}{| v|^{2}}=v_ig,$$
which means that
$$
(\mathbf{I}-\mathbf{P}_v)(A_{\pm,i}g)=\mp(\mathbf{I}-\mathbf{P}_v)(\partial_ig).
$$
Therefore, we can deduce that
\begin{equation*}
\begin{split}
\||g|\|^2_{\sigma}\geq& C_{1}\Big(\|\langle v\rangle^{\frac{\gamma}{2}}\mathbf{P}_v\nabla_{\mathcal{H}_{\pm}}g\|^2_{L^2(\mathbb{R}^3)}+\|\langle v\rangle^{\frac{\gamma+2}{2}}(\mathbf{I}-\mathbf{P}_v)\nabla_{\mathcal{H}_{\pm}}g\|^2_{L^2(\mathbb{R}^3)}\Big)\\
\geq& C_{1}\|\langle v\rangle^{\frac{\gamma}{2}}\nabla_{\mathcal{H}_{\pm}}g\|^2_{L^2(\mathbb{R}^3)}.
\end{split}
\end{equation*}
\end{proof}
We can also refer to \cite{DeL} and reference works for the spectral analysis.
For simplicity, we define in the following, for $s\in \mathbb{R}$,
$$
\|\langle v\rangle^{s}u\|_{L^{2}(\mathbb{R}^{3})}=\| u\|_{2, s},
$$
and notice that
$$
\|\nabla_{\mathcal{H}_{\pm}}^{m}u\|^2_{2,s}=\sum^3_{k=1}\|A_{+,k}\nabla_{\mathcal{H}_{\pm}}^{m-1}u\|^2_{2,s}=\sum_{|\alpha|=m}\frac{m!}{\alpha!}\|A^{\alpha}_{\pm}u\|^2_{2,s},
$$
where we use the notation $\langle v\rangle=(1+| v|^2)^{1/2}$.

\begin{lemma}\label{covo}
For any $s>-3$, we have, for $\delta>0$,
\begin{equation}\label{cov}
\int_{\mathbb{R}^3}|v-w|^{s}e^{-\delta|w|^2}dw\lesssim \langle v\rangle^{s}.
\end{equation}
\end{lemma}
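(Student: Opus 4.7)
The plan is to split the $w$--integration based on the relative sizes of $|v|$ and $|w|$. The only two obstacles are the (possible) singularity of $|v-w|^s$ at $w=v$, which is integrable on $\mathbb{R}^3$ precisely because $s>-3$, and the growth of $|v-w|^s$ at infinity when $s>0$, which is absorbed by the Gaussian $e^{-\delta|w|^2}$. The proof amounts to organizing these two facts so that the outcome matches $\langle v\rangle^s$.

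First I would reduce to $|v|\ge 1$. For $|v|\le 1$ we have $\langle v\rangle\sim 1$, and boundedness of the integral by a constant is immediate: after the translation $u=w-v$ one has $e^{-\delta|w|^2}\le e^{-\delta(|u|-1)_+^2}$, and the split $\{|u|\le 1\}\cup\{|u|>1\}$ gives a finite contribution on the first piece (by $s>-3$) and a finite contribution on the second (by Gaussian decay, together with the trivial bound $|u|^s\le 1$ for $s\le 0$ or $|u|^s\le (1+|u|)^s$ for $s\ge 0$).

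For $|v|\ge 1$, so that $\langle v\rangle\sim |v|$, I would partition $\mathbb{R}^3=A\cup B$ with
$$A=\{w:|w|\le |v|/2\},\qquad B=\{w:|w|>|v|/2\}.$$
On $A$ one has $|v|/2\le |v-w|\le 3|v|/2$, hence $|v-w|^s\lesssim |v|^s\sim\langle v\rangle^s$ regardless of the sign of $s$, and the Gaussian integrates to a constant, giving the desired estimate on $A$. On $B$ we exploit $e^{-\delta|w|^2}\le e^{-\delta|v|^2/8}\,e^{-\delta|w|^2/2}$, reducing matters to showing
$$\int_{\mathbb{R}^3}|v-w|^s\,e^{-\delta|w|^2/2}\,dw\lesssim P(|v|)$$
for some polynomial $P$. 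This follows by splitting off the ball $\{|v-w|\le 1\}$, where the singularity is integrable thanks to $s>-3$, and bounding the complement by $(|v|+|w|)^{s_+}$ times a Gaussian. The exponential prefactor $e^{-\delta|v|^2/8}$ then dominates $P(|v|)\langle v\rangle^{-s}$, completing the bound on $B$.

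The main technical step is region $B$: one has to trade the exponential decay coming from $|w|\gtrsim|v|$ simultaneously against the near-singular behaviour of $|v-w|^s$ (when $s$ is close to $-3$) and its polynomial growth (when $s>0$). This is the one place in the argument where the hypothesis $s>-3$ is used in an essential way.
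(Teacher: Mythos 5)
The paper states Lemma \ref{covo} without any proof at all (it is a classical convolution estimate, of the type appearing in Guo's work on the Landau equation), so there is no in-paper argument to compare against; your proposal supplies a correct and complete proof. Your decomposition --- $\{|v|\le 1\}$, then for $|v|\ge 1$ the regions $\{2|w|\le |v|\}$ and $\{2|w|>|v|\}$ --- is exactly the partition the authors themselves use later in the proof of Proposition \ref{3-estimate}, and each step checks out: on $\{2|w|\le|v|\}$ one has $|v-w|\sim|v|$ so the Gaussian just integrates to a constant; on $\{2|w|>|v|\}$ the factor $e^{-\delta|v|^2/8}$ extracted from the Gaussian absorbs both the polynomial growth of $|v-w|^s$ for $s>0$ and the loss $\langle v\rangle^{-s}$ for $s<0$, while the local singularity at $w=v$ is integrable precisely because $s>-3$. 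Nothing further is needed.
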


The weighted of $\mathbf{P}_v\nabla_{\mathcal{H}_{\pm}}g$ and $(\mathbf{I}-\mathbf{P}_v)\nabla_{\mathcal{H}_{\pm}}g$ are different in the norm $\||\ \cdot\ |\|_{\sigma}$, we need to study the trilinear estimate of nonlinear Landau operator.

\begin{proposition}\label{3-estimate}
For $f,g,h\in\mathcal{S}(\mathbb{R}^3)$, $\gamma\ge0$, we have
\begin{equation*}
\left|\langle\Gamma(f, g), h\rangle\right|\lesssim\| f\|_{L^2(\mathbb{R}^3)}\| |g|\|_{\sigma}\| |h|\|_{\sigma}.
\end{equation*}
\end{proposition}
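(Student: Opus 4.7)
The plan is to decompose $\Gamma = \Gamma_1 + \Gamma_2$ via Lemma \ref{detail} and integrate by parts with $A_{+,i}^{*}=A_{-,i}$ to transfer the outer creation operator onto $h$, reducing the problem to the two integrals
$$\langle\Gamma_1(f,g),h\rangle = \sum_{i,j=1}^{3}\int(\phi^{ij}*(\sqrt{\mu}f))\,(A_{+,j}g)\,(A_{-,i}h)\,dv$$
and
$$\langle\Gamma_2(f,g),h\rangle = -\sum_{i,j=1}^{3}\int(\phi^{ij}*(\sqrt{\mu}A_{+,j}f))\,g\,(A_{-,i}h)\,dv.$$
By Proposition \ref{norm}, $\||g|\|_\sigma$ controls $\mathbf{P}_v\nabla_{\mathcal{H}_+}g$ only with the weak weight $\langle v\rangle^{\gamma/2}$ and $(\mathbf{I}-\mathbf{P}_v)\nabla_{\mathcal{H}_+}g$ with the stronger weight $\langle v\rangle^{(\gamma+2)/2}$ (and analogously for $h$), so I need to track how each kernel contribution splits along the $\mathbf{P}_v$ and $(\mathbf{I}-\mathbf{P}_v)$ directions. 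The basic convolution tool will be Lemma \ref{covo} applied after a Cauchy--Schwarz in $w$ that peels $\mu^{1/4}$ off of $\sqrt{\mu}f$.

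\textbf{Handling $\Gamma_2$.} Using $\sqrt{\mu}\,A_{+,j}f = -\partial_j(\sqrt{\mu}f)$ together with an integration by parts inside the convolution and the Landau identity $\sum_{j=1}^{3}\partial_j\phi^{ij}(v) = -2v_i|v|^{\gamma}$ collapses $\sum_j \phi^{ij}*(\sqrt{\mu}A_{+,j}f)$ into $2(v_i|v|^{\gamma})*(\sqrt{\mu}f)$. Cauchy--Schwarz in the convolution and Lemma \ref{covo} (with $s=2(\gamma+1)>-3$) then give the pointwise bound $|(v_i|v|^{\gamma})*(\sqrt{\mu}f)(v)|\lesssim \langle v\rangle^{\gamma+1}\|f\|_{L^{2}}$. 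Splitting $\nabla_{\mathcal{H}_-}h$ into its $\mathbf{P}_v$ and $(\mathbf{I}-\mathbf{P}_v)$ parts and pairing $g$ with the weight $\langle v\rangle^{(\gamma+2)/2}$ from $\||g|\|_\sigma$, the exponents balance: $\gamma+1 = \tfrac{\gamma+2}{2}+\tfrac{\gamma}{2}$ in the parallel subcase and $\gamma+1\le \tfrac{\gamma+2}{2}+\tfrac{\gamma+2}{2}$ in the perpendicular subcase. A final Cauchy--Schwarz in $v$ then bounds this piece by $\|f\|_{L^2}\||g|\|_\sigma\||h|\|_\sigma$.

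\textbf{Handling $\Gamma_1$ and the main obstacle.} The crude pointwise estimate $|(\phi^{ij}*\sqrt{\mu}f)(v)|\lesssim \langle v\rangle^{\gamma+2}\|f\|_{L^{2}}$ matches the required weights only when both $A_{+,j}g$ and $A_{-,i}h$ carry the strong weight $\langle v\rangle^{(\gamma+2)/2}$, i.e.\ for the perpendicular--perpendicular contribution of $\sum_{i,j}(\phi^{ij}*\sqrt{\mu}f)\,A_{+,j}g\,A_{-,i}h$. For the other three contributions I would exploit the null structure $\phi^{ij}(u)X_iY_j = |u|^{\gamma+2}\big((\mathbf{I}-\mathbf{P}_u)X\big)\cdot\big((\mathbf{I}-\mathbf{P}_u)Y\big)$ of the Landau kernel. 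Choosing $X=Y = v/|v|$ and using $|(\mathbf{I}-\mathbf{P}_{v-w})v|^2 = |v|^2|(\mathbf{I}-\mathbf{P}_v)w|^2/|v-w|^2$ yields $\phi^{ij}(v-w)v_iv_j/|v|^{2} = |v-w|^{\gamma}\,|(\mathbf{I}-\mathbf{P}_v)w|^{2}$; the factor $|(\mathbf{I}-\mathbf{P}_v)w|^{2}\le|w|^{2}$ is absorbed by the Gaussian in $\sqrt{\mu}$, and Lemma \ref{covo} then upgrades the effective kernel on the parallel--parallel piece from $\langle v\rangle^{\gamma+2}$ down to $\langle v\rangle^{\gamma}$, matching the weight product $\langle v\rangle^{\gamma/2}\cdot\langle v\rangle^{\gamma/2}$. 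For the two mixed pieces (one parallel, one perpendicular), the orthogonality $v\cdot(\mathbf{I}-\mathbf{P}_v)Y = 0$ gains a single power of $|w|$, producing the intermediate weight $\langle v\rangle^{\gamma+1}=\langle v\rangle^{\gamma/2}\cdot\langle v\rangle^{(\gamma+2)/2}$. Summing the four contributions with the $\Gamma_2$ estimate closes the proof. The main difficulty is precisely this parallel--parallel piece of $\Gamma_1$: the crude bound loses two powers of $\langle v\rangle$ that $\||g|\|_\sigma\||h|\|_\sigma$ cannot compensate, and the only remedy is the pointwise exposure of the null direction of $\phi$ together with the asymptotic $|v-w|\approx|v|$ valid on the effective support of $\mu$.
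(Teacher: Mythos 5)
Your proposal is correct, and for the heart of the matter --- the $\Gamma_1$ term --- it takes a genuinely different route from the paper. The paper splits $\mathbb{R}^3_v\times\mathbb{R}^3_w$ into the three regions $\{|v|\le 1\}$, $\{2|w|\ge|v|,\ |v|\ge1\}$, $\{2|w|\le|v|,\ |v|\ge1\}$, disposes of the first two by the crude bound $|\phi^{ij}(v-w)|\lesssim\langle v\rangle^{\gamma+2}$ (using boundedness of $\langle v\rangle$, resp.\ a spare Gaussian factor $e^{-|v|^2/32}$ peeled off $\sqrt{\mu(w)}$), and on the third region Taylor-expands $\phi^{ij}(v-w)$ in $w$ to second order, invoking the null identities $\sum_i\phi^{ij}(v)v_i=0$ and $\sum_{i,j}\partial_k\phi^{ij}(v)v_iv_j=0$ to insert $(\mathbf{I}-\mathbf{P}_v)$ projections in the zeroth- and first-order terms, while the second-order remainder carries the benign weight $|\partial_{kl}\phi^{ij}(v-sw)|\lesssim\langle v\rangle^{\gamma}$ (which is where the restriction $2|w|\le|v|$ is actually used). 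You instead exploit the \emph{exact} pointwise null structure of the kernel: the identity $(\mathbf{I}-\mathbf{P}_{v-w})v=(\mathbf{I}-\mathbf{P}_{v-w})w$ (equivalently your formula $|(\mathbf{I}-\mathbf{P}_{v-w})v|^2=|v|^2|(\mathbf{I}-\mathbf{P}_v)w|^2/|v-w|^2$, which checks out: both sides equal $(|v|^2|w|^2-(v\cdot w)^2)/|v-w|^2$) converts the parallel--parallel kernel into $|v-w|^{\gamma}|(\mathbf{I}-\mathbf{P}_v)w|^2$ and the mixed kernels into objects bounded by $|v-w|^{\gamma+1}|w|$, with the extra powers of $|w|$ absorbed by the Gaussian before applying Lemma \ref{covo}. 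This is a global argument requiring no region decomposition and no Taylor remainder, so it is arguably cleaner; the price is the slightly more delicate exact algebra of the projections, whereas the paper only ever evaluates $\phi^{ij}$ and its derivatives at the single point $v$. Your treatment of $\Gamma_2$ (integration by parts inside the convolution, $|\partial_j\phi^{ij}(v)|\lesssim\langle v\rangle^{\gamma+1}$, weight splitting $\gamma+1=\tfrac{\gamma+2}{2}+\tfrac{\gamma}{2}$) coincides with the paper's up to the cosmetic use of the summed identity $\sum_j\partial_j\phi^{ij}=-2v_i|v|^{\gamma}$.
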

\begin{proof}
In fact, the integration
\begin{align*}
\langle\Gamma(f, g), h\rangle=&\langle\Gamma_1(f, g), h\rangle+\langle\Gamma_2(f, g), h\rangle\\
=&\sum^3_{i,j=1}\iint_{\mathbb{R}^3\times \mathbb{R}^3}\phi^{ij}(v-w)\sqrt{\mu(w)}f(w)A_{+,j}g(v)
A_{-,i}h(v)dwdv\\
&-\sum^3_{i,j=1}\iint_{\mathbb{R}^3\times \mathbb{R}^3}\phi^{ij}(v-w)\sqrt{\mu(w)}A_{+,j}f(w)g(v)
A_{-,i}h(v)dwdv.
\end{align*}
For the first term $\langle\Gamma_1(f, g), h\rangle$,
we decompose the integration region $[v,w]\in \mathbb{R}^3\times \mathbb{R}^3$ into three parts:
$$
\{| v|\leq1\},\ \  \{2| w|\geq| v|,| v|\geq1\},\ \ and\ \  \{2| w|\leq| v|,| v|\geq1\}.
$$
For the first part $\{| v|\leq1\}$, since
$$|\phi^{ij}(v)|\lesssim |v|^{\gamma+2},$$
 we can deduce from the Cauchy-Schwartz's inequality, Lemma \ref{covo} and \eqref{definition3} that
\begin{align*}
\left|\langle\Gamma_1(f, g), h\rangle\right|
&\lesssim\sum^3_{i,j=1}\|f\|_{L^2(\mathbb{R}^3)}\int_{\mathbb{R}^3} \langle  v\rangle^{\gamma+2}|A_{+,j}g
A_{-,i}h|dv\\
&\leq\sum^3_{i,j=1}\|f\|_{L^2(\mathbb{R}^3)} \|\langle v\rangle^{\frac{\gamma}{2}}A_{+,j}g\|_{L^2}\|\langle v\rangle^{\frac{\gamma}{2}}A_{-,i}h\|_{L^2}\\
&\lesssim \|f\|_{L^2(\mathbb{R}^3)} \||g|\|_{\sigma}\||h|\|_{\sigma}.
\end{align*}
For the second part $\{2| w|\geq| v|,| v|\geq1\}$,  we have
$$e^{-\frac{|w|^2}{4}}\leq e^{-\frac{|w|^2}{8}} e^{-\frac{|v|^2}{32}}.$$
Similar to the proof as the first part, one can verify that
\begin{align*}
\left|\langle\Gamma_1(f, g), h\rangle\right|
&\lesssim\sum^3_{i,j=1}\|f\|_{L^2(\mathbb{R}^3)}\int_{\mathbb{R}^3} \langle v\rangle^{\gamma+2}e^{-\frac{|v|^2}{32}}|A_{+,j}g
A_{-,i}h|dv\\
&\lesssim\sum^3_{i,j=1}\|f\|_{L^2(\mathbb{R}^3)} \|\langle v\rangle^{\frac{\gamma}{2}}A_{+,j}g\|_{L^2}\|\langle v\rangle^{\frac{\gamma}{2}}A_{-,i}h\|_{L^2}\\
&\lesssim \|f\|_{L^2(\mathbb{R}^3)} \||g|\|_{\sigma}\||h|\|_{\sigma}.
\end{align*}
Now we finally consider the third part $\{2| w|\leq| v|,| v|\geq1\}.$ Expanding $\phi^{ij}(v-w)$ to get
\begin{equation}\label{expanding}
\phi^{ij}(v-w)=\phi^{ij}(v)+\sum^3_{k=1}\partial_{k}\phi^{ij}(v)w_k+\frac{1}{2}\sum_{k,l=1}^{3}\left(\int_{0}^{1}\partial_{kl}\phi^{ij}(v-sw)ds\right)w_{k}w_l.
\end{equation}
The expansion \eqref{expanding} along with the fact that
$$
\sum_{i=1}^{3}\phi^{ij}v_{i}=0, \quad \sum_{i,j=1}^{3}\partial_{k}\phi^{ij}(v)v_iv_j=-2\sum_{j=1}^{3}\phi^{kj}(v)v_j=0,
$$
show immediately
\begin{align*}
&\sum_{i,j=1}^{3}\iint_{2| w|\leq| v|,| v|\geq1}\phi^{ij}(v-w)\sqrt{\mu(w)}f(w)A_{+,j}g(v)
A_{-,i}h(v)dwdv\\
=&\sum_{i,j=1}^{3}\iint_{2| w|\leq| v|,| v|\geq1}\phi^{ij}(v)\sqrt{\mu(w)}f(w)(\mathbf{I}-\mathbf{P}_{v})A_{+,j}g
(\mathbf{I}-\mathbf{P}_{v})A_{-,i}hdwdv\\
&+\sum_{k=1}^{3}\sum_{i,j=1}^{3}\iint_{2| w|\leq| v|,| v|\geq1}\partial_{k}\phi^{ij}(v)w_{k}\sqrt{\mu(w)}f(w)\{\mathbf{P}_{v}A_{+,j}g
(\mathbf{I}-\mathbf{P}_{v})A_{-,i}h\\
&
\qquad\qquad+(\mathbf{I}-\mathbf{P}_{v})A_{\pm,j}g
\mathbf{P}_{v}A_{-,i}h+(\mathbf{I}-\mathbf{P}_{v})A_{+,j}g
(\mathbf{I}-\mathbf{P}_{v})A_{-,i}h\}dwdv\\
&+\frac{1}{2}\sum_{k,l=1}^{3}\sum_{i,j=1}^{3}\int_{0}^{1}\iint_{2| w|\leq| v|,| v|\geq1}\langle v\rangle^{2\theta}\partial_{kl}\phi^{ij}(v-sw)w_{k}w_{l}\\
&\qquad\qquad\times\sqrt{\mu(w)}f(w)A_{+,j}g(v)
A_{-,i}h(v)dwdvds.
\end{align*}
Since $2| w|\leq| v|,| v|\geq1,0<s<1,$ for $\gamma\ge0$, we have
\begin{align}\label{v-s}
|\partial_{kl}\phi^{ij}(v-sw)|\leq C| v-sw|^{\gamma}\leq C\langle v\rangle^{\gamma}.
\end{align}
It follows from the inequality \eqref{v-s}, the norm inequality \eqref{definition3} that
\begin{align*}
\left|\langle\Gamma_1(f, g), h\rangle\right|
&\lesssim\|f\|_{L^2(\mathbb{R}^3)}\|(\mathbf{I}-\mathbf{P}_{v})\nabla_{\mathcal{H}_{+}}g\|_{2,\frac{\gamma+2}{2}}
\|\mathbf{P}_{v}\nabla_{\mathcal{H}_{-}}h\|_{2,\frac{\gamma}{2}}\\
&\quad+\|f\|_{L^2(\mathbb{R}^3)}\|(\mathbf{I}-\mathbf{P}_{v})\nabla_{\mathcal{H}_{+}}g\|_{2,\frac{\gamma+2}{2}}\|(\mathbf{I}-\mathbf{P}_{v})\nabla_{\mathcal{H}_{-}}h\|_{2,\frac{\gamma+2}{2}}\\
&\quad+\|f\|_{L^2(\mathbb{R}^3)}\|\nabla_{\mathcal{H}_{+}}g\|_{2,\frac{\gamma}{2}}\|(\mathbf{I}-\mathbf{P}_{v})\nabla_{\mathcal{H}_{-}}h\|_{2,\frac{\gamma+2}{2}}\\
&\quad+\|f\|_{L^2(\mathbb{R}^3)}\|\nabla_{\mathcal{H}_{+}}g\|_{2,\frac{\gamma}{2}}\|\nabla_{\mathcal{H}_{-}}h\|_{2,\frac{\gamma}{2}}
\\
&\lesssim\|f\|_{L^2(\mathbb{R}^3)}\||g|\|_{\sigma}\||h|\|_{\sigma}.
\end{align*}
For the second term $\langle\Gamma_2(f, g), h\rangle$, we use an integration by parts and an commutator operation inside the convolution to get
\begin{align*}
\phi^{ij}\ast( \sqrt{\mu} A_{+,j}f)=-\phi^{ij}\ast\partial_j( \sqrt{\mu}f)=-\partial_j\phi^{ij}\ast( \sqrt{\mu}f),
\end{align*}
which implies that
\begin{align*}
\langle\Gamma_2(f, g), h\rangle
=\sum^3_{i,j=1}\langle  \{[\partial_j\phi^{ij}\ast( \sqrt{\mu}f)]g\},A_{-,i}h\rangle.
\end{align*}
Since $|\partial_j\phi^{ij}(v)|\lesssim |v|^{\gamma+1}$, by using \eqref{cov}, we have
$$|\partial_j\phi^{ij}\ast( \sqrt{\mu}f)|\lesssim \langle v\rangle^{\gamma+1}\|f\|_{L^2(\mathbb{R}^3)}.$$
Then it follows from \eqref{definition2} and \eqref{definition3} that
\begin{align*}
|\langle\Gamma_2(f, g), h\rangle|
&\lesssim \|f\|_{L^2(\mathbb{R}^3)}\sum^3_{i=1}\int_{\mathbb{R}^3}\langle v\rangle^{\gamma+1}|g||A_{-,i}h|dv\\
&\leq \|f\|_{L^2(\mathbb{R}^3)}\|g\|_{2,\frac{\gamma+2}{2}}\sum^3_{i=1}\|A_{-,i}h\|_{2,\frac{\gamma}{2}}\\
&\lesssim\|f\|_{L^2(\mathbb{R}^3)}\||g|\|_{\sigma}\||h|\|_{\sigma}.
\end{align*}
We conclude that
$$|\langle\Gamma(f, g), h\rangle|\lesssim \|f\|_{L^2(\mathbb{R}^3)}\||g|\|_{\sigma}\||h|\|_{\sigma}.$$
\end{proof}

\section{New Leibniz formula and Trilinear estimate}\label{S3}
In this section, we present a new Leibniz's formula which is crucial to prove the commutators estimate of linear and nonlinear Landau operators.
\begin{lemma}\label{leibniz}
For any $m\in \mathbb{N}$, we have
\begin{equation}\label{1}
\begin{split}
  \nabla^{m}_{\mathcal{H}_{+}}{\bf \Gamma}(f, g)
=&\sum^m_{k=0}C^k_m\sum^3_{i,j=1}A_{+,i} \{(\phi^{ij}\ast( \sqrt{\mu}\nabla^{k}_{\mathcal{H}_{+}}f) )A_{+,j}\nabla^{m-k}_{\mathcal{H}_{+}}g\} \\
&-\sum^m_{k=0}C^k_m\sum^3_{i,j=1}A_{+,i} \{(\phi^{ij}\ast( \sqrt{\mu} A_{+,j}\nabla^{k}_{\mathcal{H}_{+}}f) )\nabla^{m-k}_{\mathcal{H}_{+}}g\}.
\end{split}
\end{equation}
\end{lemma}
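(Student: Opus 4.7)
The proof proceeds by reducing to a pointwise Leibniz-type identity for the single creation operator $A_{+,k}$ and then iterating. I would start from the representation $\Gamma(f,g)=\Gamma_1(f,g)+\Gamma_2(f,g)$ of Lemma \ref{detail} and apply $A^{\alpha}_+$ for a fixed multi-index $\alpha$ with $|\alpha|=m$. Since the operators $A_{+,i}=v_i/2-\partial_i$ pairwise commute (the commutators $[v_i/2,\partial_j]$ being scalars that cancel symmetrically), the outermost $A_{+,i}$ in $\Gamma_1$ and $\Gamma_2$ can be pulled through $A^{\alpha}_+$, and the whole problem reduces to computing $A^{\alpha}_+$ applied to a product of the form $(\phi^{ij}*(\sqrt{\mu}F))\,H$.

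The key pointwise identity is
\[
A_{+,k}(FG) = -(\partial_k F)\,G + F\,(A_{+,k}G),
\]
which follows immediately from $A_{+,k}=v_k/2-\partial_k$: the multiplicative part $\tfrac{v_k}{2}FG$ combines with $-F\partial_k G$ to produce $F\,A_{+,k}G$, leaving $-(\partial_k F)G$. This is a Leibniz rule in which one factor is differentiated by $-\partial_k$ and the other by $A_{+,k}$. Since the $A_{+,i}$ commute with themselves and ordinary partials commute with themselves, iterating yields the multivariate expansion
\[
A^{\alpha}_+(FG) = \sum_{\beta\le\alpha}\binom{\alpha}{\beta}\bigl((-\partial)^\beta F\bigr)\,A^{\alpha-\beta}_+G.
\]

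To specialize, I take $F=\phi^{ij}*(\sqrt{\mu}f)$. Since derivatives pass through convolution, $(-\partial)^\beta F=\phi^{ij}*\bigl((-\partial)^\beta(\sqrt{\mu}f)\bigr)$, and iterating $\partial_k(\sqrt{\mu}\phi)=-\sqrt{\mu}A_{+,k}\phi$ from \eqref{creat-1} gives $(-\partial)^\beta(\sqrt{\mu}f)=\sqrt{\mu}A^{\beta}_+f$. Substituting back,
\[
A^{\alpha}_+\bigl[(\phi^{ij}*(\sqrt{\mu}f))\,A_{+,j}g\bigr] = \sum_{\beta\le\alpha}\binom{\alpha}{\beta}\bigl(\phi^{ij}*(\sqrt{\mu}A^{\beta}_+f)\bigr)\,A^{\alpha-\beta}_+A_{+,j}g,
\]
and the analogous identity with $A_{+,j}f$ in place of $f$ handles the second piece $\Gamma_2$.

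The remaining task is to repackage the right-hand side in the form \eqref{1}. I would group the Leibniz sum by the total order $k=|\beta|$ of derivatives falling on $f$; the multi-index weights $m!/\alpha!$ from \eqref{multi-indices} combine with $\binom{\alpha}{\beta}$ via the identity $\frac{m!}{\alpha!}\binom{\alpha}{\beta}=\binom{m}{k}\frac{k!}{\beta!}\frac{(m-k)!}{(\alpha-\beta)!}$, which produces the binomial factor $C^k_m$ and distributes the remaining weights compatibly with the definitions of $\nabla^{k}_{\mathcal{H}_+}f$ and $\nabla^{m-k}_{\mathcal{H}_+}g$. The only real obstacle is keeping this combinatorial bookkeeping straight: once the Vandermonde-type identity above is recognized, the formula \eqref{1} becomes a direct translation of the pointwise Leibniz expansion into the vector-valued $\nabla_{\mathcal{H}_+}$ notation.
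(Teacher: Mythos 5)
Your proof is correct and follows essentially the same route as the paper: both rest on the single-step identity $A_{+,k}(FG)=-(\partial_kF)G+F\,A_{+,k}G$ combined with $\partial_k(\sqrt{\mu}F)=-\sqrt{\mu}A_{+,k}F$ pushed through the convolution, which is exactly the paper's $m=1$ computation in \eqref{m=1}. The only difference is organizational: you iterate at the multi-index level and repackage with a Vandermonde-type identity, whereas the paper inducts on $m$ directly in the $\nabla_{\mathcal{H}_+}$ notation using Pascal's rule $C^{k-1}_{m}+C^{k}_{m}=C^{k}_{m+1}$.
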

\begin{proof}
By using the representation $\Gamma(f,g)$ of \eqref{Gamma} in Lemma \ref{detail}, and  the fact
\[
A_{+,i}\nabla^{m}_{\mathcal{H}_{+}}=\nabla^{m}_{\mathcal{H}_{+}}A_{+,i},
\]
we have
\[
\nabla^{m}_{\mathcal{H}_{+}}{\bf \Gamma}(f, g)
=\sum^3_{i,j=1}A_{+,i} \nabla^{m}_{\mathcal{H}_{+}}\{(\phi^{ij}\ast(\sqrt{\mu}f))A_{+,j}g-(\phi^{ij}\ast(\sqrt{\mu}A_{+,j} f))g\}.
\]
Now we intend to prove
\begin{align*}
&\nabla^{m}_{\mathcal{H}_{+}}\Big([\phi^{ij}\ast(\sqrt{\mu}f)]A_{+,j}g\Big)=\sum^m_{k=0}C^k_m[\phi^{ij}\ast( \sqrt{\mu} \nabla^{k}_{\mathcal{H}_{+}}f) ]\nabla^{m-k}_{\mathcal{H}_{+}}A_{+,j}g,\\
&\nabla^{m}_{\mathcal{H}_{+}}\Big([\phi^{ij}\ast(\sqrt{\mu}A_{+,j}f)]g\Big)=\sum^m_{k=0}C^k_m[\phi^{ij}\ast( \sqrt{\mu} \nabla^{k}_{\mathcal{H}_{+}}A_{+,j}f) ]\nabla^{m-k}_{\mathcal{H}_{+}}g.
\end{align*}
So that, we only need to prove the following formula
\begin{equation}\label{leibniz-formula}
\nabla^{m}_{\mathcal{H}_{+}}\big((\phi^{ij}*(\sqrt{\mu}F))G\big)=
\sum^m_{k=0}C^k_m(\phi^{ij}*(\sqrt{\mu}\nabla^{k}_{\mathcal{H}_{+}}F))\nabla^{m-k}_{\mathcal{H}_{+}}G.
\end{equation}
We prove this formula by induction. $m=0$, the formula \eqref{leibniz-formula} is trivially true.\\
For  $m=1$. Using \eqref{creat-1}, directly calculation shows that
\begin{equation}\label{m=1}
\begin{split}
&\nabla_{\mathcal{H}_{+}}\big((\phi^{ij}*(\sqrt{\mu}F))G\big)\\
=&\Big(-(\phi^{ij}*\partial_1(\sqrt{\mu}F))G,-(\phi^{ij}*\partial_2(\sqrt{\mu}F))G,-(\phi^{ij}*\partial_3(\sqrt{\mu}F))G\Big)\\
&+(\phi^{ij}*(\sqrt{\mu}F))\nabla_{\mathcal{H}_{+}}G\\
=&\Big((\phi^{ij}*(\sqrt{\mu}A_{+,1}F))G,(\phi^{ij}*(\sqrt{\mu}A_{+,2}F))G,(\phi^{ij}*(\sqrt{\mu}A_{+,3}F))G\Big)\\
&+(\phi^{ij}*(\sqrt{\mu}F))\nabla_{\mathcal{H}_{+}}G\\
=&(\phi^{ij}*(\sqrt{\mu}\nabla_{\mathcal{H}_{+}}F))G+(\phi^{ij}*(\sqrt{\mu}F))\nabla_{\mathcal{H}_{+}}G.
\end{split}
\end{equation}
Now assume that, the equality \eqref{leibniz-formula} holds true for  $m\geq1$, we intend to prove that it is right for $m+1$.

It follows from the induction assumption and \eqref{m=1} that
\begin{align*}
&\nabla^{m+1}_{\mathcal{H}_{+}}\big((\phi^{ij}*(\sqrt{\mu}F))G\big)\\
&=\nabla_{\mathcal{H}_{+}}\left(\sum^m_{k=0}C^k_m(\phi^{ij}*(\sqrt{\mu}\nabla^{k}_{\mathcal{H}_{+}}F))\nabla^{m-k}_{\mathcal{H}_{+}}G\right)\\
&=\sum^m_{k=0}C^k_m(\phi^{ij}*(\sqrt{\mu}\nabla^{k+1}_{\mathcal{H}_{+}}F))\nabla^{m-k}_{\mathcal{H}_{+}}G\\
&\qquad+\sum^m_{k=0}C^k_m(\phi^{ij}*(\sqrt{\mu}\nabla^{k}_{\mathcal{H}_{+}}F))\nabla^{m-k=1}_{\mathcal{H}_{+}}G\\
&=\sum^{m+1}_{k=0}C^k_{m+1}(\phi^{ij}*(\sqrt{\mu}\nabla^{k}_{\mathcal{H}_{+}}F))\nabla^{m+1-k}_{\mathcal{H}_{+}}G,
\end{align*}
where we use the fact
 $$C^{k-1}_{m}+C^{k}_{m}=C^{k}_{m+1}.$$
We end the proof of Lemma \ref{leibniz}.
\end{proof}
\begin{remark}
Using \eqref{creat-2}, we have the fact
$$\phi^{ij}*(\sqrt{\mu}\nabla^{k}_{\mathcal{H}_{+}}\sqrt{\mu})=(-1)^{k}\phi^{ij}*\nabla^k\mu=(-1)^{k}\nabla^k\sigma^{ij},$$
which implies,
\begin{equation}\label{leibniz-formula2}
\nabla^{m}_{\mathcal{H}_{+}}\big(\sigma^{ij}G\big)=\sum^m_{k=0}C^k_m(-1)^{k}(\nabla^k\sigma^{ij}) \nabla^{m-k}_{\mathcal{H}_{+}}G.
\end{equation}
\end{remark}

In the following, we prepare to prove the trilinear estimates of the nonlinear Landau operators.
\begin{proposition}\label{trilinear1}
Let $f,g,h\in\mathcal{S}(\mathbb{R}^3)$, for $m\in \mathbb{N}$, $\gamma\ge0$, there is a positive constant $C_0$ which is independent on $m$, such that
\begin{align*}
|\langle \nabla^m_{\mathcal{H}_{+}}{\bf \Gamma}(f, g),\nabla^m_{\mathcal{H}_{+}}h\rangle|
&\leq C_0\|f\|_{L^2(\mathbb{R}^3)}\||\nabla^{m}_{\mathcal{H}_{+}}g|\|_{\sigma}\||\nabla^{m}_{\mathcal{H}_{+}}h|\|_{\sigma}\\
&\quad+C_0\sum^m_{k=1}C^k_m\|\nabla^{k-1}_{\mathcal{H}_{+}}f\|_{L^2(\mathbb{R}^3)}\||\nabla^{m-k}_{\mathcal{H}_{+}}g|\|_{\sigma}\||\nabla^{m}_{\mathcal{H}_{+}}h|\|_{\sigma}.
\end{align*}
\end{proposition}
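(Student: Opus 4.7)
The plan is to combine the new Leibniz formula of Lemma \ref{leibniz} with the trilinear estimate of Proposition \ref{3-estimate}, performing one extra integration by parts inside the convolution whenever $k\geq 1$ in order to trade one derivative on $f$ for one derivative on $\phi^{ij}$.

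By Lemma \ref{leibniz} and the fact that $A^{\ast}_{+,i}=A_{-,i}$ in $L^2$, I first write
\begin{equation*}
\langle\nabla^m_{\mathcal{H}_{+}}{\bf \Gamma}(f,g),\nabla^m_{\mathcal{H}_{+}}h\rangle = \sum_{k=0}^m C^k_m\sum_{i,j=1}^{3}\bigl(T^{ij}_{1,k}-T^{ij}_{2,k}\bigr),
\end{equation*}
\begin{align*}
T^{ij}_{1,k} &=\int_{\mathbb{R}^3}\bigl(\phi^{ij}\ast\sqrt{\mu}\nabla^k_{\mathcal{H}_{+}}f\bigr)\,A_{+,j}\nabla^{m-k}_{\mathcal{H}_{+}}g\,A_{-,i}\nabla^m_{\mathcal{H}_{+}}h\,dv,\\
T^{ij}_{2,k} &=\int_{\mathbb{R}^3}\bigl(\phi^{ij}\ast\sqrt{\mu}A_{+,j}\nabla^k_{\mathcal{H}_{+}}f\bigr)\,\nabla^{m-k}_{\mathcal{H}_{+}}g\,A_{-,i}\nabla^m_{\mathcal{H}_{+}}h\,dv,
\end{align*}
with the tensor-valued $\nabla^k_{\mathcal{H}_{+}}$ read componentwise through the multi-index weights of \eqref{multi-indices}. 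For $k=0$, applying Proposition \ref{3-estimate} directly with $(f,\nabla^m_{\mathcal{H}_{+}}g,\nabla^m_{\mathcal{H}_{+}}h)$ in the role of $(f,g,h)$ produces the first summand $C_0\|f\|_{L^2}\||\nabla^m_{\mathcal{H}_{+}}g|\|_{\sigma}\||\nabla^m_{\mathcal{H}_{+}}h|\|_{\sigma}$ of the claimed bound.

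For $k\geq 1$ a naive repetition of the proof of Proposition \ref{3-estimate} would yield the bound $\|\nabla^k_{\mathcal{H}_{+}}f\|_{L^2}\||\nabla^{m-k}_{\mathcal{H}_{+}}g|\|_{\sigma}\||\nabla^m_{\mathcal{H}_{+}}h|\|_{\sigma}$, one derivative too many on $f$. To reduce it to $\|\nabla^{k-1}_{\mathcal{H}_{+}}f\|_{L^2}$ I extract an outermost $A_{+,\ell}$ from $\nabla^k_{\mathcal{H}_{+}}f$ and push it inside the convolution using \eqref{creat-1}:
\begin{equation*}
\phi^{ij}\ast\bigl(\sqrt{\mu}\,A_{+,\ell}\nabla^{k-1}_{\mathcal{H}_{+}}f\bigr) = -\partial_\ell\phi^{ij}\ast\bigl(\sqrt{\mu}\nabla^{k-1}_{\mathcal{H}_{+}}f\bigr).
\end{equation*}
For $T^{ij}_{2,k}$ I combine this with the standard IBP $\phi^{ij}\ast(\sqrt{\mu}A_{+,j}F)=-\partial_j\phi^{ij}\ast(\sqrt{\mu}F)$ already used in Proposition \ref{3-estimate}, converting the convolution into one involving $\partial_j\partial_\ell\phi^{ij}$. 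Since $|\partial\phi^{ij}|\lesssim|v|^{\gamma+1}$ and $|\partial^2\phi^{ij}|\lesssim|v|^\gamma$ for $\gamma\geq 0$, I can then reuse the three-region decomposition $\{|v|\leq 1\}\cup\{2|w|\geq|v|,|v|\geq 1\}\cup\{2|w|\leq|v|,|v|\geq 1\}$ from the proof of Proposition \ref{3-estimate}; Lemma \ref{covo} bounds the convolutions pointwise by $\langle v\rangle^{\gamma+1}\|\nabla^{k-1}_{\mathcal{H}_{+}}f\|_{L^2}$ or $\langle v\rangle^{\gamma}\|\nabla^{k-1}_{\mathcal{H}_{+}}f\|_{L^2}$, and Cauchy--Schwarz together with the coercivity inequalities of Proposition \ref{norm} closes the estimate to yield $C^k_m\|\nabla^{k-1}_{\mathcal{H}_{+}}f\|_{L^2}\||\nabla^{m-k}_{\mathcal{H}_{+}}g|\|_{\sigma}\||\nabla^m_{\mathcal{H}_{+}}h|\|_{\sigma}$.

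The main obstacle will be the combinatorial bookkeeping when peeling off $A_{+,\ell}$ componentwise: each $A^\alpha_+f$ with $|\alpha|=k$ must be written as $A_{+,\ell}A^{\alpha-e_\ell}_+f$ for a consistent choice of $\ell$, and after summation the multi-index weights $\frac{k!}{\alpha!}$ of \eqref{multi-indices} must reassemble into the intrinsic gradient norm $\|\nabla^{k-1}_{\mathcal{H}_{+}}f\|_{L^2}$ rather than into a weighted sum of individual $\|A^\beta_+f\|_{L^2}$. In the far-field region one must additionally verify that the cancellation $\sum_i\phi^{ij}v_i=0$ exploited in Proposition \ref{3-estimate} survives, up to harmless lower-order remainders, after one or two derivatives are taken on $\phi^{ij}$. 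The positivity $\gamma\geq 0$ is used exactly as in Proposition \ref{3-estimate} to guarantee $|\partial_{kl}\phi^{ij}(v-sw)|\lesssim\langle v\rangle^\gamma$ uniformly for $s\in[0,1]$.
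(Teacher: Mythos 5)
Your proposal follows essentially the same route as the paper's proof: isolate the $k=0$ term and bound it by Proposition \ref{3-estimate}, and for $k\geq 1$ trade one $A_{+}$ on $f$ for a derivative of $\phi^{ij}$ inside the convolution (giving $\nabla\phi^{ij}\ast(\sqrt{\mu}\nabla^{k-1}_{\mathcal{H}_{+}}f)$ for the first group and $\partial_j\nabla\phi^{ij}\ast(\sqrt{\mu}\nabla^{k-1}_{\mathcal{H}_{+}}f)$ for the second), then rerun the three-region decomposition using the cancellation $\sum_{i,j}\partial_{p}\phi^{ij}(v)v_iv_j=-2\sum_j\phi^{pj}(v)v_j=0$. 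The "obstacles" you flag are exactly the points the paper resolves, and they go through as you anticipate.
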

\begin{proof}
One can verify that
\begin{equation*}
\begin{split}
 &\langle\nabla^m_{\mathcal{H}_{+}}{\bf \Gamma}(f, g),\nabla^m_{\mathcal{H}_{+}}h\rangle-\langle{\bf \Gamma}(f,\nabla^m_{\mathcal{H}_{+}} g),\nabla^m_{\mathcal{H}_{+}}h\rangle\\
=&\sum^m_{k=1}C^k_m\sum^3_{i,j=1}\langle  \{[\phi^{ij}\ast( \sqrt{\mu} \nabla^k_{\mathcal{H}_{+}}f) ]A_{+,j}\nabla^{m-k}_{\mathcal{H}_{+}}g\},A_{-,i} \nabla^m_{\mathcal{H}_{+}}h\rangle\\
&-\sum^m_{k=1}C^k_m\sum^3_{i,j=1}\langle  \{[\phi^{ij}\ast( \sqrt{\mu} A_{+,j}\nabla^k_{\mathcal{H}_{+}}f) ]\nabla^{m-k}_{\mathcal{H}_{+}}g\},A_{-,i} \nabla^m_{\mathcal{H}_{+}}h\rangle\\
=&\mathbf{\Gamma}_{m,1}+\mathbf{\Gamma}_{m,2}.
\end{split}
\end{equation*}
Firstly, we can deduce from Proposition \ref{3-estimate} that
$$\left|\langle{\bf \Gamma}(f,\nabla^m_{\mathcal{H}_{+}} g),\nabla^m_{\mathcal{H}_{+}}h\rangle\right|\lesssim \|f\|_{L^2(\mathbb{R}^3)}\||\nabla^m_{\mathcal{H}_{+}}g|\|_{\sigma}\||\nabla^m_{\mathcal{H}_{+}}h|\|_{\sigma}.$$
For the term $\mathbf{\Gamma}_{m,1}$,  the derivative on the convolution to get,
$$\phi^{ij}\ast( \sqrt{\mu} \nabla^k_{\mathcal{H}_{+}}f)=\nabla\phi^{ij}\ast( \sqrt{\mu} \nabla^{k-1}_{\mathcal{H}_{+}}f).$$
We only need to consider the third part $\{2| w|\leq| v|,| v|\geq1\}$. Taylor Expanding $\nabla\phi^{ij}\phi^{ij}(v-w)$ to get
\begin{equation*}
\nabla\phi^{ij}(v-w)=\nabla\phi^{ij}(v)+\sum^3_{l=1}\left(\int_{0}^{1}\partial_{l}\nabla\phi^{ij}(v-sw)ds\right)w_l.
\end{equation*}
This along with the fact that
$$
\sum_{i,j=1}^{3}\partial_{p}\phi^{ij}(v)v_iv_j=-2\sum_{j=1}^{3}\phi^{pj}(v)v_j=0,\quad\forall p=1,2,3
$$
show immediately
\begin{align*}
&\sum_{i,j=1}^{3}\iint\nabla\phi^{ij}(v-w)\sqrt{\mu(w)}\nabla^{k-1}_{\mathcal{H}_{+}}f(w)A_{+,j}\nabla^{m-k}_{\mathcal{H}_{+}}g(v)
A_{-,i}\nabla^m_{\mathcal{H}_{+}}h(v)dwdv\\
=&\sum_{i,j=1}^{3}\iint\nabla\phi^{ij}(v)\sqrt{\mu(w)}\nabla^{k-1}_{\mathcal{H}_{+}}f(w)(\mathbf{I}-\mathbf{P}_{v})A_{+,j}\nabla^{m-k}_{\mathcal{H}_{+}}g(v)\mathbf{P}_{v}A_{-,i}\nabla^m_{\mathcal{H}_{+}}hdwdv\\
&+\sum_{i,j=1}^{3}\iint\nabla\phi^{ij}(v)\sqrt{\mu(w)}\nabla^{k-1}_{\mathcal{H}_{+}}f(w)A_{+,j}\nabla^{m-k}_{\mathcal{H}_{+}}g(v)(\mathbf{I}-\mathbf{P}_{v})A_{-,i}\nabla^m_{\mathcal{H}_{+}}hdwdv\\
&+\sum_{l=1}^{3}\sum_{i,j=1}^{3}\int_{0}^{1}\iint_{2| w|\leq| v|,| v|\geq1}\partial_{l}\nabla\phi^{ij}(v-sw)w_{l}\sqrt{\mu(w)}\\
&\qquad\qquad\qquad\qquad  \times  \nabla^{k-1}_{\mathcal{H}_{+}}f(w)A_{+,j}\nabla^{m-k}_{\mathcal{H}_{+}}g(v)
A_{-,i}\nabla^m_{\mathcal{H}_{+}}h(v)dwdvds.
\end{align*}
For $\gamma\ge0$, consider that
$$
|\nabla\phi^{ij}(v)|\lesssim |v|^{\gamma+1}\leq \langle v\rangle^{\gamma+1},
$$
and for $2| w|\leq| v|,| v|\geq1,0<s<1,$  we have
\begin{align*}
|\partial_{l} \nabla\phi^{ij}(v-sw)|\leq C| v-sw|^{\gamma}\lesssim\langle v\rangle^{\gamma}, \quad\forall p=1,2,3.
\end{align*}
It follows from Cauchy-Schwartz's inequality and Proposition \ref{norm} that
\begin{align*}
|\mathbf{\Gamma}_{m,1}|&\lesssim\sum^m_{k=1}C^k_m
\Big\{
\|(\mathbf{I}-\mathbf{P}_{v})\nabla^{m-k+1}_{\mathcal{H}_{+}}g\|_{2,\frac{\gamma+2}{2}}\|\mathbf{P}_{v}\nabla_{\mathcal{H}_{-}}\nabla^{m}_{\mathcal{H}_{+}}h\|_{2,\frac{\gamma}{2}}\\
&\qquad\qquad\qquad+\|\nabla^{m-k+1}_{\mathcal{H}_{+}}g\|_{2,\frac{\gamma}{2}}\|(\mathbf{I}-\mathbf{P}_{v})\nabla_{\mathcal{H}_{-}}\nabla^{m}_{\mathcal{H}_{+}}h\|_{2,\frac{\gamma+2}{2}}\\
&\qquad\qquad\qquad+\|\nabla^{m-k+1}_{\mathcal{H}_{+}}g\|_{2,\frac{\gamma}{2}}\|\nabla_{\mathcal{H}_{-}}\nabla^{m}_{\mathcal{H}_{+}}h\|_{2,\frac{\gamma}{2}}\Big\}\|\nabla^{k-1}_{\mathcal{H}_{+}}f\|_{L^2(\mathbb{R}^3)}\\
&\lesssim\sum^m_{k=1}C^k_m
\|\nabla^{k-1}_{\mathcal{H}_{+}}f\|_{L^2(\mathbb{R}^3)}\||\nabla^{m-k}_{\mathcal{H}_{+}}g|\|_{\sigma}\||\nabla^{m}_{\mathcal{H}_{+}}h|\|_{\sigma}.
\end{align*}
For the second term $\mathbf{\Gamma}_{m,2}$, we use an integration by parts and an commutator operation inside the convolution to get
\begin{align*}
\phi^{ij}\ast( \sqrt{\mu} A_{+,j}\nabla^{k}_{\mathcal{H}_{+}}f)=\partial_{j}\nabla\phi^{ij}\ast( \sqrt{\mu}\nabla^{k-1}_{\mathcal{H}_{+}}f),
\end{align*}
it implies that
\begin{align*}
\mathbf{\Gamma}_{m,2}
=-\sum^m_{k=1}C^k_m\sum^3_{i,j=1}\langle  \{[\partial_{j}\nabla\phi^{ij}\ast( \sqrt{\mu}\nabla^{k-1}_{\mathcal{H}_{+}}f)]\nabla^{m-k}_{\mathcal{H}_{+}}g\},A_{-,i} \nabla^{m}_{\mathcal{H}_{+}}h\rangle.
\end{align*}
Consider again that, for $\gamma\ge0$, $l=1,2,3$
$$|\partial_{lj}\phi^{ij}(v)|\lesssim \langle v\rangle^{\gamma},$$
using the Cauchy-Schwarz inequality, Lemma \ref{covo} and definition \eqref{definition3},  we find that
\begin{align*}
|\mathbf{\Gamma}_{m,2}|\lesssim& \sum^m_{k=1}C^k_m\|\nabla^{k-1}_{\mathcal{H}_{+}}f\|_{L^2(\mathbb{R}^3)}
\|\nabla^{m-k+1}_{\mathcal{H}_{+}}g\|_{2,\frac{\gamma}{2}}
\|\nabla_{\mathcal{H}_{-}}\nabla^{m}_{\mathcal{H}_{+}}h\|_{2,\frac{\gamma}{2}}\\
\lesssim&\sum^m_{k=1}C^k_m\|\nabla^{k-1}_{\mathcal{H}_{+}}f\|_{L^2(\mathbb{R}^3)}\||\nabla^{m-k}_{\mathcal{H}_{+}}g|\|_{\sigma}\||\nabla^{m}_{\mathcal{H}_{+}}h|\|_{\sigma}.
\end{align*}
Substituting the estimates of  $\mathbf{\Gamma}_{m,1}$ and $\mathbf{\Gamma}_{m,2}$ into \eqref{1},
we end the proof of Proposition \ref{trilinear1}.
\end{proof}
From the equality \eqref{K-A}, we set $g=\sqrt{\mu}$ in Proposition \ref{trilinear1} to get
\begin{corollary}\label{K-estimate}
Let $f,h\in\mathcal{S}(\mathbb{R}^3)$, for $m\in \mathbb{N}$,  there is a positive constant $C_0$ which is independent on $m$,  such that,
for $\gamma\ge0$,
\begin{align*}
|\langle \nabla^m_{\mathcal{H}_{+}}\mathcal{L}_2f, \nabla^m_{\mathcal{H}_{+}}h\rangle|
&\leq
C_0^{m+1}\sqrt{m!}\|f\|_{L^2(\mathbb{R}^3)}\| \nabla^m_{\mathcal{H}_{+}}h\|_{\sigma}\\
&\quad+\sum^m_{k=1}C^k_mC_0^{m-k+1}\sqrt{(m-k)!}\|\nabla^{k-1}_{\mathcal{H}_{+}}f\|_{L^2(\mathbb{R}^3)}\||\nabla^{m}_{\mathcal{H}_{+}}h|\|_{\sigma}.
\end{align*}
\end{corollary}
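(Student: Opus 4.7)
The plan is to specialize Proposition~\ref{trilinear1} to $g=\sqrt{\mu}$: by \eqref{K-A}, $\mathcal{L}_2 f=-\Gamma(f,\sqrt{\mu})$, and so Proposition~\ref{trilinear1} immediately gives
\[
|\langle\nabla^m_{\mathcal{H}_+}\mathcal{L}_2f,\nabla^m_{\mathcal{H}_+}h\rangle|\le C_0\|f\|_{L^2}\||\nabla^m_{\mathcal{H}_+}\sqrt{\mu}|\|_\sigma\||\nabla^m_{\mathcal{H}_+}h|\|_\sigma+C_0\sum_{k=1}^m C_m^k\|\nabla^{k-1}_{\mathcal{H}_+}f\|_{L^2}\||\nabla^{m-k}_{\mathcal{H}_+}\sqrt{\mu}|\|_\sigma\||\nabla^m_{\mathcal{H}_+}h|\|_\sigma.
\]
Matching against the claimed inequality and absorbing the leading constant $C_0$ into a relabeled constant reduces the corollary to the single \emph{key estimate}
\[
\||\nabla^{j}_{\mathcal{H}_+}\sqrt{\mu}|\|_\sigma\le C_0^{\,j}\sqrt{j!}\qquad\text{for every }j\ge 0.
\]
Plugging $j=m$ into the first term and $j=m-k$ into the $k$-th summand then produces exactly the two pieces of the corollary.

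To prove the key estimate I will work in the Hermite basis. The defining relations $A_{-,k}\sqrt{\mu}=0$ and $[A_{-,i},A_{+,j}]=\delta_{ij}$ yield, after taking repeated adjoints, the identity $\|A_+^\alpha\sqrt{\mu}\|_{L^2}^2=\alpha!$; hence by \eqref{multi-indices} the unweighted quantity is $\|\nabla^j_{\mathcal{H}_+}\sqrt{\mu}\|_{L^2}^2=\binom{j+2}{2}\,j!$. To upgrade this to the weighted $\sigma$-norm, I combine the pointwise bound $|\sigma^{ij}(v)|\lesssim\langle v\rangle^{\gamma+2}$ (which follows from $|\phi^{ij}(v)|\lesssim|v|^{\gamma+2}$ together with Lemma~\ref{covo}) with the definition \eqref{L2A}, reducing the problem to estimating $\|\langle v\rangle^{(\gamma+2)/2}\nabla\nabla^{j}_{\mathcal{H}_+}\sqrt{\mu}\|_{L^2}$ and $\|\langle v\rangle^{(\gamma+4)/2}\nabla^j_{\mathcal{H}_+}\sqrt{\mu}\|_{L^2}$. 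Using the ladder identities $v_k=A_{+,k}+A_{-,k}$ and $\partial_k=\tfrac12(A_{-,k}-A_{+,k})$ together with $A_{-,k}A_+^\alpha\sqrt{\mu}=\alpha_kA_+^{\alpha-e_k}\sqrt{\mu}$, every application of $v_k$ or $\partial_k$ to $A_+^\alpha\sqrt{\mu}$ produces two terms $c_\beta A_+^\beta\sqrt{\mu}$ with $|\beta|=|\alpha|\pm 1$ and $|c_\beta|\le|\alpha|+1$. Iterating at most $N=\lceil(\gamma+4)/2\rceil+1$ times generates $O(1)$ such terms with $|\beta|\le j+N$ and coefficients of size at most $(j+N)^N$, giving $\|\langle v\rangle^{(\gamma+4)/2}A_+^\alpha\sqrt{\mu}\|_{L^2}\le C(j+1)^{3N/2}\sqrt{j!}$, and similarly for the derivative term. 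Summing over $|\alpha|=j$ with weights $j!/\alpha!$ and using $\#\{|\alpha|=j\}\le(j+1)^2$ yields $\||\nabla^j_{\mathcal{H}_+}\sqrt{\mu}|\|_\sigma\le P(j)\sqrt{j!}$ for a fixed polynomial $P$, and polynomial growth in $j$ is absorbed into $C_0^{\,j}$ by choosing $C_0$ large enough.

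The main obstacle is not conceptual but rather one of careful bookkeeping: one must verify that the weight exponent $(\gamma+4)/2$ is a \emph{fixed} constant independent of $j$, so the number $N$ of ladder-operator applications stays uniformly bounded in $j$. This is what prevents any super-exponential factor from appearing and keeps the factorial behavior sharp at $\sqrt{j!}$. Once this is secured, the corollary follows immediately from Proposition~\ref{trilinear1} as explained above, with the final constant $C_0$ chosen larger than both the trilinear constant from Proposition~\ref{trilinear1} and the constant arising from the key estimate.
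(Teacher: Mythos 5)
Your proposal follows the paper's proof exactly: both substitute $g=\sqrt{\mu}$ into Proposition \ref{trilinear1} and reduce the corollary to the key bound $\||\nabla^{j}_{\mathcal{H}_{+}}\sqrt{\mu}|\|_{\sigma}\leq C_0^{j}\sqrt{j!}$, which the paper obtains from $A_+^{\alpha}\sqrt{\mu}=\sqrt{\alpha!}\,\psi_{\alpha}$ together with the polynomial bound $\||\psi_{\alpha}|\|^2_{\sigma}\lesssim |\alpha|^{\frac{\gamma}{2}+4}$. Your ladder-operator bookkeeping merely makes explicit the polynomial-in-$j$ (and at worst $3^{j}$, equally absorbable) growth that the paper asserts and absorbs into $C_0^{j}$, so the two arguments coincide.
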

\begin{proof}
Since $\sqrt{\mu}=\psi_0, $  one can verify that
\begin{align*}
A_+^{\alpha}\sqrt{\mu}=\sqrt{\alpha!}\psi_{\alpha}.
\end{align*}
Then
\begin{align*}
\||\nabla^{m-k}_{\mathcal{H}_{+}}\sqrt{\mu}|\|^2_{\sigma}&=\sum_{|\alpha|=m-k}\frac{(m-k)!}{\alpha!}\||A_+^{\alpha}\sqrt{\mu}|\|^2_{\sigma}\\
&=\sum_{|\alpha|=m-k}(m-k)!\||\psi_{\alpha}|\|^2_{\sigma}\\
&\lesssim (m-k)!(m-k)^{\frac{\gamma}{2}+4} \leq C_0^{m-k}(m-k)!.
\end{align*}
We end the proof of Corollary \ref{K-estimate} by substituting $g=\sqrt{\mu}$ into the estimate of Proposition \ref{trilinear1}.
\end{proof}

\section{The coercivity of linear Landau operator}\label{S4}
In this section, we show the coercivity of linear Landau operator.
On the basis of the predecessors Lemmas, the coercivity estimate for the linear Landau operator $\mathcal{L}_1$ is as follows.
\begin{proposition}\label{multiply-estimate}
Let $g\in\mathcal{S}(\mathbb{R}^3)$,  $\mathcal{L}_1$ was defined in \eqref{Gamma}, for any $m\in\mathbb{N}$, there exist a positive constant $C_{0}>0$ which is independent on $m$, such that,
\begin{align*}
\left(\nabla^{m}_{\mathcal{H}_{+}}\mathcal{L}_1g,\nabla^{m}_{\mathcal{H}_{+}}g\right)_{L^{2}(\mathbb{R}^{3})}
\geq&\| |\nabla^{m}_{\mathcal{H}_{+}}g|\|_{\sigma}^{2}-C_0\||\nabla^{m}_{\mathcal{H}_{+}}g|\|_{\sigma}\|\nabla^{m}_{\mathcal{H}_{+}}g\|_{2,\frac{\gamma}{2}}\\
&-C_0\sum^m_{k=1}k C^{k}_{m}\sqrt{k!}\| |\nabla^{m-k}_{\mathcal{H}_{+}}g|\|_{\sigma}\|| \nabla^{m}_{\mathcal{H}_{+}}g|\|_{\sigma}\\
&-C_0\sum^{m-1}_{k=1}k (m-k)C^{k}_{m}\sqrt{k!}\||\nabla^{m-k-1}_{\mathcal{H}_{+}}g|\|_{\sigma}\||\nabla^{m}_{\mathcal{H}_{+}}g|\|_{\sigma}.
\end{align*}
\end{proposition}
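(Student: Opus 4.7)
The plan starts from the representation $\mathcal{L}_1 g=\sum_{i,j=1}^{3}A_{+,i}\{\sigma^{ij}A_{-,j}g\}$ supplied by Lemma \ref{detail}. Since $A_{+,i}$ commutes with $\nabla^{m}_{\mathcal{H}_{+}}$ and $A_{+,i}^{*}=A_{-,i}$ in $L^{2}$, we obtain
\[
\bigl(\nabla^{m}_{\mathcal{H}_{+}}\mathcal{L}_{1}g,\nabla^{m}_{\mathcal{H}_{+}}g\bigr)_{L^{2}}=\sum_{i,j=1}^{3}\bigl\langle \nabla^{m}_{\mathcal{H}_{+}}(\sigma^{ij}A_{-,j}g),\,A_{-,i}\nabla^{m}_{\mathcal{H}_{+}}g\bigr\rangle.
\]
Applying the Leibniz identity \eqref{leibniz-formula2} with $G=A_{-,j}g$ expands the first factor as $\sum_{k=0}^{m}(-1)^{k}C^{k}_{m}(\nabla^{k}\sigma^{ij})\nabla^{m-k}_{\mathcal{H}_{+}}A_{-,j}g$, and I would treat the $k=0$ principal term separately from the corrections $1\le k\le m$.

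For the $k=0$ term, the canonical relation $[A_{+,i},A_{-,j}]=-\delta_{ij}$ yields $[A_{+}^{\alpha},A_{-,j}]=-\alpha_{j}A_{+}^{\alpha-e_{j}}$, so $\nabla^{m}_{\mathcal{H}_{+}}A_{-,j}g = A_{-,j}\nabla^{m}_{\mathcal{H}_{+}}g+R_{m,j}$, with a remainder $R_{m,j}$ bounded in $L^{2}$ by $m\,\|\nabla^{m-1}_{\mathcal{H}_{+}}g\|_{L^{2}}$. The principal piece $\sum_{i,j}\int\sigma^{ij}A_{-,j}\nabla^{m}_{\mathcal{H}_{+}}g\cdot A_{-,i}\nabla^{m}_{\mathcal{H}_{+}}g\,dv$ matches $\||\nabla^{m}_{\mathcal{H}_{+}}g|\|^{2}_{\sigma}$ after expanding $A_{-,i}=\partial_{i}+v_{i}/2$, using the symmetry of $\sigma^{ij}$ to integrate by parts the cross term, and invoking $\sum_{j}\sigma^{ij}v_{j}=\sigma^{i}$ from \eqref{sigma}; the resulting integration-by-parts residue involves $\partial_{i}\sigma^{i}$ and is bounded by $\|\nabla^{m}_{\mathcal{H}_{+}}g\|^{2}_{2,\gamma/2}$, which by \eqref{definition2} produces the $-C_0\||\cdot|\|_\sigma\|\cdot\|_{2,\gamma/2}$ term. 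The $R_{m,j}$-contribution is absorbed into the $k=1$ summand of the first sum below, whose coefficient $1\cdot C^{1}_{m}\sqrt{1!}=m$ matches exactly.

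For the $k\geq 1$ corrections the key ingredient is a pointwise estimate of the form $|\nabla^{k}\sigma^{ij}(v)|\lesssim k\,C_{0}^{k}\sqrt{k!}\,\langle v\rangle^{\gamma+2}$. I would deduce it from $\nabla^{k}\sigma^{ij}=\phi^{ij}\ast\nabla^{k}\mu$ and the Hermite representation $\partial^{\alpha}\mu=H_{\alpha}(v)\mu$ satisfying $|H_{\alpha}(v)|\lesssim C_{0}^{k}\sqrt{\alpha!}\,\langle v\rangle^{k}$ for $|\alpha|=k$, together with Lemma \ref{covo}; the polynomial factor $k$ comes from summing over the $O(k^{2})$ multi-indices of length $k$ in three variables. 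Splitting once more $\nabla^{m-k}_{\mathcal{H}_{+}}A_{-,j}g=A_{-,j}\nabla^{m-k}_{\mathcal{H}_{+}}g-[\nabla^{m-k}_{\mathcal{H}_{+}},A_{-,j}]g$ and applying Cauchy–Schwarz together with Proposition \ref{norm} (to convert weighted $L^{2}$ norms into $\||\cdot|\|_{\sigma}$ norms, treating separately the $\mathbf{P}_{v}$ and $\mathbf{I}-\mathbf{P}_{v}$ components as in the proof of Proposition \ref{norm}) yields both sums on the right: the $A_{-,j}$-main part gives the first sum $\sum_{k=1}^{m}k\,C^{k}_{m}\sqrt{k!}\||\nabla^{m-k}_{\mathcal{H}_{+}}g|\|_{\sigma}\||\nabla^{m}_{\mathcal{H}_{+}}g|\|_{\sigma}$, while the commutator $[\nabla^{m-k}_{\mathcal{H}_{+}},A_{-,j}]$ contributes an additional factor $m-k$ and lowers the derivative index by one, producing the second sum.

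The principal obstacle is a careful bookkeeping of two nested commutations — Leibniz against $\sigma^{ij}$ and $A_{-,j}$ against $\nabla^{m-k}_{\mathcal{H}_{+}}$ — so that no extraneous $m$-dependent factor escapes the binomial $C^{k}_{m}$. The second delicate point is the sharp $\sqrt{k!}$ growth of $\nabla^{k}\sigma^{ij}$: this is exactly the mechanism that yields the analytic Gelfand–Shilov $S^{1}_{1}$ scale rather than a Gevrey one, and it rests on the Maxwellian structure of $\mu$ together with the Hermite-type orthogonality already exploited in Corollary \ref{K-estimate}.
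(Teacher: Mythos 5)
Your skeleton coincides with the paper's own proof: the same representation of $\mathcal{L}_1$ from Lemma \ref{detail}, the same adjoint/integration-by-parts step, the same Leibniz formula \eqref{leibniz-formula2}, and the same commutation $\nabla^{m-k}_{\mathcal{H}_{+}}A_{-,j}=A_{-,j}\nabla^{m-k}_{\mathcal{H}_{+}}-(m-k)\nabla^{m-k-1}_{\mathcal{H}_{+}}$, leading to the decomposition $\||\nabla^{m}_{\mathcal{H}_{+}}g|\|_{\sigma}^{2}+\mathbf{R}_{0}+\mathbf{R}_{1}+\mathbf{R}_{2}$. Your treatment of the principal term and of $\mathbf{R}_{0}$ (via $\sum_{j}\sigma^{ij}v_{j}=\sigma^{i}$ and $|\partial_{i}\sigma^{i}|\lesssim\langle v\rangle^{\gamma+1}$) matches the paper, up to the minor point that the residue is bounded by $\|\nabla^{m}_{\mathcal{H}_{+}}g\|_{2,\frac{\gamma}{2}}\|\nabla^{m}_{\mathcal{H}_{+}}g\|_{2,\frac{\gamma+2}{2}}$ rather than by $\|\nabla^{m}_{\mathcal{H}_{+}}g\|_{2,\frac{\gamma}{2}}^{2}$, and your explicit absorption of the $k=0$ commutator term into the $k=1$ summand of the first sum is sensible bookkeeping.

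The genuine gap is in the estimation of the correction terms. The pointwise bound $|\nabla^{k}\sigma^{ij}(v)|\lesssim kC_{0}^{k}\sqrt{k!}\,\langle v\rangle^{\gamma+2}$ followed by Cauchy--Schwarz cannot close: by \eqref{definition3} the norm $\||u|\|_{\sigma}$ controls the full vectors $\nabla_{\mathcal{H}_{\pm}}u$ only with the weight $\langle v\rangle^{\frac{\gamma}{2}}$, the stronger weight $\langle v\rangle^{\frac{\gamma+2}{2}}$ being available only on the $(\mathbf{I}-\mathbf{P}_{v})$ components. Distributing a kernel of size $\langle v\rangle^{\gamma+2}$ between $A_{-,j}\nabla^{m-k}_{\mathcal{H}_{+}}g$ and $A_{-,i}\nabla^{m}_{\mathcal{H}_{+}}g$ therefore leaves a deficit of $\langle v\rangle$ whenever one $\mathbf{P}_{v}$ component appears and of $\langle v\rangle^{2}$ when both do; the quantity $\|\langle v\rangle^{\frac{\gamma+2}{2}}\mathbf{P}_{v}A_{-,i}\nabla^{m}_{\mathcal{H}_{+}}g\|_{L^{2}}$ is genuinely not controlled by $\||\nabla^{m}_{\mathcal{H}_{+}}g|\|_{\sigma}$, and citing Proposition \ref{norm} does not help since that proposition only provides the lower bound for $\||\cdot|\|_{\sigma}$. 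The missing mechanism is the one used in Proposition \ref{3-estimate} and Lemma \ref{R-2}: decompose the integration into $\{|v|\le1\}$, $\{2|w|\ge|v|,\,|v|\ge1\}$ and $\{2|w|\le|v|,\,|v|\ge1\}$, and on the last region Taylor-expand $\phi^{ij}(v-w)$ in $w$, invoking the null identities $\sum_{i}\phi^{ij}(v)v_{i}=0$ and $\sum_{i,j}\partial_{k}\phi^{ij}(v)v_{i}v_{j}=0$ so that the $\mathbf{P}_{v}$ components only ever meet kernels of order $\langle v\rangle^{\gamma+1}$ or $\langle v\rangle^{\gamma}$. This is exactly how the paper handles $\mathbf{R}_{1}$ (rewriting it as $\sum_{k}C^{k}_{m}\langle{\bf\Gamma}(\nabla^{k}_{\mathcal{H}_{+}}\sqrt{\mu},\nabla^{m-k}_{\mathcal{H}_{+}}g),\nabla^{m}_{\mathcal{H}_{+}}g\rangle$ and applying Proposition \ref{3-estimate}, the factor $k\sqrt{k!}$ arising from $\|\nabla^{k}_{\mathcal{H}_{+}}\sqrt{\mu}\|_{L^{2}}$ rather than from a pointwise bound) and $\mathbf{R}_{2}$. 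Without this step your argument fails already at $k=1$ and for the absorbed $k=0$ commutator term, where even the improved bounds $|\nabla\sigma^{ij}|\lesssim\langle v\rangle^{\gamma+1}$ and $|\sigma^{ij}|\lesssim\langle v\rangle^{\gamma+2}$ leave at least one factor with an uncontrollable weight.
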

\begin{proof}
Recalled the formula $\mathcal{L}_1g$ in \eqref{Gamma} and $\sigma^{ij}$ in \eqref{sigma},  since $\nabla^{m}_{\mathcal{H}_{+}}A_{+,i}=A_{+,i}\nabla^{m}_{\mathcal{H}_{+}}$, integrated by parts, we have
\begin{align*}
\left(\nabla^{m}_{\mathcal{H}_{+}}\mathcal{L}_1g,\nabla^{m}_{\mathcal{H}_{+}}g\right)_{L^{2}(\mathbb{R}^{3})}
=\sum_{i,j=1}^{3}\int_{\mathbb{R}^3}\nabla^{m}_{\mathcal{H}_{+}}\Big(\sigma^{ij}A_{-,j}g\Big)A_{-,i}\nabla^{m}_{\mathcal{H}_{+}}g
dv.
\end{align*}
Then by using the new Leibniz formula \eqref{leibniz-formula2}
$$
\nabla^{m}_{\mathcal{H}_{+}}\Big(\sigma^{ij}A_{-,j}g\Big)=\sum^m_{k=0}C^{k}_{m}(-1)^{k}\nabla^{k}\sigma^{ij}\nabla^{m-k}_{\mathcal{H}_{+}}A_{-,j}g,
$$
along with the facts on the operator commutation
\begin{align*}
&[A_{+,j}, A_{-,j}]=A_{+,j}A_{-,j}-A_{-,j}A_{+,j}=-1,\\
&[A_{+,l}, A_{-,j}]=0\quad\text{if}\quad l\neq j,
\end{align*}
which means that, for $m-k\ge1$
$$\nabla^{m-k}_{\mathcal{H}_{+}}A_{-,j}=A_{-,j}\nabla^{m-k}_{\mathcal{H}_{+}}-(m-k)\nabla^{m-k-1}_{\mathcal{H}_{+}},$$
We can deduce that
\begin{equation}\label{linear-decomp}
\begin{split}
 &\left(\nabla^{m}_{\mathcal{H}_{+}}\mathcal{L}_1g,\nabla^{m}_{\mathcal{H}_{+}}g\right)_{L^{2}(\mathbb{R}^{3})}\\
=&\|| \nabla^{m}_{\mathcal{H}_{+}}g|\|_{\sigma}^{2}+\sum_{i=1}^{3}\int_{\mathbb{R}^3}\sigma^{i}\nabla^{m}_{\mathcal{H}_{+}}g\partial_i\nabla^{m}_{\mathcal{H}_{+}}gdv\\
&+\sum^m_{k=1}C^{k}_{m}(-1)^{k}\sum_{i,j=1}^{3}
\int_{\mathbb{R}^3}\nabla^{k}\sigma^{ij}A_{-,j}\nabla^{m-k}_{\mathcal{H}_{+}}gA_{-,i}\nabla^{m}_{\mathcal{H}_{+}}gdv\\
&-\sum^{m-1}_{k=1}C^{k}_{m}(-1)^{k}(m-k)\sum_{i,j=1}^{3}
\int_{\mathbb{R}^3}\nabla^{k}\sigma^{ij}
\nabla^{m-k-1}_{\mathcal{H}_{+}}gA_{-,i}\nabla^{m}_{\mathcal{H}_{+}}gdv\\
=&\| |\nabla^{m}_{\mathcal{H}_{+}}g|\|_{\sigma}^{2}+\mathbf{R}_{0}(g)+\mathbf{R}_{1}(g)+\mathbf{R}_{2}(g).
\end{split}
\end{equation}
So that the proof of Proposition \ref{multiply-estimate} is reduced to the estimations of $\mathbf{R}_0(g), \mathbf{R}_1(g)$ and $\mathbf{R}_2(g)$, which will be showed in the following three Lemmas.
\end{proof}
\begin{lemma}
For $g\in\mathcal{S}(\mathbb{R}^3)$,  for any $\alpha\in\mathbb{N}^3$,
\begin{align*}
|\mathbf{R}_{0}(g)|
\lesssim\|\nabla^{m}_{\mathcal{H}_{+}}g\|_{2,\frac{\gamma}{2}}\||\nabla^{m}_{\mathcal{H}_{+}}g|\|_{\sigma}.
\end{align*}
\end{lemma}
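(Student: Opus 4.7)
The plan is to integrate by parts in $v$ so as to transfer the derivative $\partial_i$ off the vector-valued quantity $\nabla^m_{\mathcal{H}_+}g$ and onto the coefficient $\sigma^i$, and then to exploit a divergence-type cancellation coming from the structure of the Landau matrix. Since $\nabla^m_{\mathcal{H}_+}g$ is vector-valued with $\nabla^m_{\mathcal{H}_+}g\cdot\partial_i\nabla^m_{\mathcal{H}_+}g=\tfrac{1}{2}\partial_i\bigl(|\nabla^m_{\mathcal{H}_+}g|^2\bigr)$, integration by parts in $v_i$ immediately rewrites the remainder as
\begin{equation*}
\mathbf{R}_0(g)\;=\;-\tfrac{1}{2}\int_{\mathbb{R}^3}\Bigl(\sum_{i=1}^3\partial_i\sigma^i\Bigr)|\nabla^m_{\mathcal{H}_+}g|^2\,dv.
\end{equation*}

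Next I would explicitly compute $\sum_i\partial_i\sigma^i$. Starting from $\sigma^i=\sum_j\phi^{ij}*(v_j\mu)$ in \eqref{sigma} and using the identity $v_j\mu=-\partial_j\mu$ together with an integration by parts inside the convolution, one obtains $\sigma^i=-\sum_j(\partial_j\phi^{ij})*\mu$. A direct calculation with $\phi^{ij}(v)=(|v|^2\delta_{ij}-v_iv_j)|v|^\gamma$ yields the divergence identity $\sum_j\partial_j\phi^{ij}(v)=-2v_i|v|^\gamma$, so that $\sigma^i=2(v_i|v|^\gamma)*\mu$. Differentiating once more and summing over $i$, since $\sum_i\partial_i(v_i|v|^\gamma)=(3+\gamma)|v|^\gamma$, we obtain
\begin{equation*}
\sum_{i=1}^3\partial_i\sigma^i\;=\;2(3+\gamma)\,\bigl(|v|^\gamma*\mu\bigr),
\end{equation*}
and Lemma \ref{covo} (applicable because $\gamma\ge 0>-3$) delivers the pointwise bound $\bigl|\sum_i\partial_i\sigma^i(v)\bigr|\lesssim\langle v\rangle^\gamma$.

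Combining the two steps gives
\begin{equation*}
|\mathbf{R}_0(g)|\;\lesssim\;\int_{\mathbb{R}^3}\langle v\rangle^\gamma|\nabla^m_{\mathcal{H}_+}g|^2\,dv\;=\;\|\nabla^m_{\mathcal{H}_+}g\|_{2,\gamma/2}^2,
\end{equation*}
and since Proposition \ref{norm}, inequality \eqref{definition2}, implies $\|\nabla^m_{\mathcal{H}_+}g\|_{2,\gamma/2}\le\|\nabla^m_{\mathcal{H}_+}g\|_{2,(\gamma+2)/2}\lesssim\||\nabla^m_{\mathcal{H}_+}g|\|_\sigma$, the claimed inequality follows by writing one factor of $\|\nabla^m_{\mathcal{H}_+}g\|_{2,\gamma/2}$ as such and controlling the other by $\||\nabla^m_{\mathcal{H}_+}g|\|_\sigma$.

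The main obstacle is that the naive pointwise estimate $|\sigma^i(v)|\lesssim\langle v\rangle^{\gamma+1}$ combined with Cauchy--Schwarz only yields $|\mathbf{R}_0(g)|\lesssim\||\nabla^m_{\mathcal{H}_+}g|\|_\sigma^2$, which cannot be absorbed by the coercive term in Proposition \ref{multiply-estimate}. It is therefore essential to integrate by parts and exploit the divergence cancellation $\sum_j\partial_j\phi^{ij}=-2v_i|v|^\gamma$: this cancellation gains a power of $\langle v\rangle^{-1}$ in the effective weight and produces the lower-order bound $\langle v\rangle^\gamma$, which is exactly what is needed to obtain a factor $\|\nabla^m_{\mathcal{H}_+}g\|_{2,\gamma/2}$ rather than $\||\nabla^m_{\mathcal{H}_+}g|\|_\sigma$ on the right-hand side.
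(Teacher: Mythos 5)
Your proof is correct and follows essentially the same route as the paper: integrate by parts to reduce $\mathbf{R}_0(g)$ to $-\tfrac12\int(\sum_i\partial_i\sigma^i)|\nabla^m_{\mathcal{H}_+}g|^2\,dv$, exploit the divergence cancellation $\sum_j\partial_j\phi^{ij}=-2v_i|v|^\gamma$, bound the resulting weight via Lemma \ref{covo}, and finish with Cauchy--Schwarz and \eqref{definition2}. The only difference is that you perform one extra integration by parts inside the convolution to get the sharper pointwise bound $|\sum_i\partial_i\sigma^i|\lesssim\langle v\rangle^{\gamma}$ (the paper stops at $\langle v\rangle^{\gamma+1}$ and splits the weight as $\langle v\rangle^{\gamma/2}\langle v\rangle^{(\gamma+2)/2}$), which yields a slightly stronger conclusion but does not change the substance of the argument.
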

\begin{proof}
For the term $\mathbf{R}_{0}(g)$, we integrate by parts to get
\begin{align*}
\mathbf{R}_{0}(g)=-\mathbf{R}_{0}(g)-\sum^3_{i=1}\int_{\mathbb{R}^3}\partial_i\sigma^{i}|\nabla^{m}_{\mathcal{H}_{+}}g|^2dv.
\end{align*}
By using the fact
$$\sum^3_{i=1}\partial_i\phi^{ij}(v)=-2|v|^{\gamma}v_j,$$
it follows that
$$\sum^3_{i=1}\partial_i\sigma^{i}=-2\sum^3_{j=1}|v|^{\gamma}v_j*(v_j\mu).$$
One can deduce from Lemma \ref{covo} that
\begin{align*}
|\partial_i\sigma^{i}|\lesssim \langle v\rangle^{\gamma+1}.
\end{align*}
Then from \eqref{definition2}, we have,
\begin{align*}
|\mathbf{R}_{0}(g)|
&
\lesssim\int_{\mathbb{R}^3}\langle v\rangle^{\gamma+1}|\nabla^{m}_{\mathcal{H}_{+}}g|^2dv\\
&\leq\|\nabla^{m}_{\mathcal{H}_{+}}g\|_{2,\frac{\gamma}{2}}
\|\nabla^{m}_{\mathcal{H}_{+}}g\|_{2,\frac{\gamma+2}{2}} \lesssim\|\nabla^{m}_{\mathcal{H}_{+}}g\|_{2,\frac{\gamma}{2}}\||\nabla^{m}_{\mathcal{H}_{+}}g|\|_{\sigma}.
\end{align*}
\end{proof}

Now we estimate $R_1(g)$.
\begin{lemma}\label{R-1}
We have
\begin{align*}
|\mathbf{R}_{1}(g)|\lesssim\sum^m_{k=1}k C^{k}_{m}\sqrt{k!}\| |\nabla^{m-k}_{\mathcal{H}_{+}}g|\|_{\sigma}\|| \nabla^{m}_{\mathcal{H}_{+}}g|\|_{\sigma}.
\end{align*}
\end{lemma}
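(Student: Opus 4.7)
The plan is to follow the template of the proof of Proposition \ref{3-estimate}: establish a suitable pointwise bound for $\nabla^{k}\sigma^{ij}$ and then convert the trilinear integral into $\||\cdot|\|_{\sigma}$ norms via weighted Cauchy--Schwarz, redistributing weights through the $\mathbf{P}_{v}$, $\mathbf{I}-\mathbf{P}_{v}$ decomposition. Since $\sigma^{ij}=\phi^{ij}\ast\mu$, for any multi-index $|\alpha|=k$ one has $\partial^{\alpha}\sigma^{ij}=\phi^{ij}\ast\partial^{\alpha}\mu$ with $\partial^{\alpha}\mu=(-1)^{k}h_{\alpha}\mu$ and $\|h_{\alpha}\sqrt{\mu}\|_{L^{2}}^{2}=\alpha!$. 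Combining $|\phi^{ij}(v-w)|\lesssim\langle v\rangle^{\gamma+2}+\langle w\rangle^{\gamma+2}$ with Cauchy--Schwarz in $L^{2}(\mu)$ gives the pointwise bound
$$|\partial^{\alpha}\sigma^{ij}(v)|\lesssim\sqrt{\alpha!}\,\langle v\rangle^{\gamma+2},\qquad|\alpha|=k,$$
and, since there are $\binom{k+2}{2}=O(k^{2})$ multi-indices of length $k$ in $\mathbb{N}^{3}$, the induced tensor bound reads $|\nabla^{k}\sigma^{ij}(v)|\lesssim k\sqrt{k!}\,\langle v\rangle^{\gamma+2}$, which already generates the desired $k\sqrt{k!}$ prefactor.

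A naive application of this pointwise bound would leave an unwanted $\langle v\rangle^{2}$ surplus relative to what the $\||A_{-,i}\nabla^{m}_{\mathcal{H}_{+}}g|\|_{\sigma}$ norm can absorb. To dispose of this surplus I split the integration domain, as in the proof of Proposition \ref{3-estimate}, into $\{|v|\le 1\}$, $\{|v|\ge 1,\,2|w|\ge|v|\}$, and $\{|v|\ge 1,\,2|w|\le|v|\}$. In the first region $v$ is bounded and there is nothing to lose. In the second region the decay $e^{-|w|^{2}/4}\le e^{-|w|^{2}/8}e^{-|v|^{2}/32}$ furnishes an exponentially small Gaussian factor in $v$ that swallows the polynomial $\langle v\rangle^{\gamma+2}$. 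In the third region I Taylor expand $\phi^{ij}(v-w)$ in $w$ about the origin to second order; the identities $\sum_{i}\phi^{ij}(v)v_{i}=0$ and $\sum_{i,j}\partial_{p}\phi^{ij}(v)v_{i}v_{j}=0$, already used in Proposition \ref{3-estimate}, force the two leading terms to couple only with the $(\mathbf{I}-\mathbf{P}_{v})$ components of $A_{-,j}\nabla^{m-k}_{\mathcal{H}_{+}}g$ and $A_{-,i}\nabla^{m}_{\mathcal{H}_{+}}g$, for which the norm inequality \eqref{definition3} accommodates the heavier weight $\langle v\rangle^{(\gamma+2)/2}$; the second-order remainder carries a $|w|^{2}$ factor inside the convolution which, after applying the Hermite moment estimate to $h_{\alpha}\mu$, produces the weight $\langle v\rangle^{\gamma/2}$ on both test factors and feeds directly into the standard $\||\cdot|\|_{\sigma}$ pairing.

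Assembling the three contributions through Cauchy--Schwarz and Proposition \ref{norm} yields, for each fixed $k\ge 1$,
$$\Bigl|\sum_{i,j=1}^{3}\int_{\mathbb{R}^{3}}\nabla^{k}\sigma^{ij}\,A_{-,j}\nabla^{m-k}_{\mathcal{H}_{+}}g\,A_{-,i}\nabla^{m}_{\mathcal{H}_{+}}g\,dv\Bigr|\lesssim k\sqrt{k!}\,\||\nabla^{m-k}_{\mathcal{H}_{+}}g|\|_{\sigma}\,\||\nabla^{m}_{\mathcal{H}_{+}}g|\|_{\sigma},$$
and summing over $k=1,\dots,m$ with the combinatorial factor $C^{k}_{m}$ completes the proof. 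The main obstacle is the bookkeeping in the Taylor region: one needs to verify that the weighted Hermite moments $\int|w|^{q}|h_{\alpha}(w)|\mu(w)\,dw\lesssim_{q}\sqrt{\alpha!}$, arising from the first-order and remainder terms, do not inflate the $k$-growth. The $\sqrt{k!}$ factor is driven entirely by the orthonormality identity $\|h_{\alpha}\sqrt{\mu}\|_{L^{2}}^{2}=\alpha!$, while the linear $k$ prefactor comes solely from the $O(k^{2})$ count of length-$k$ multi-indices in $\mathbb{N}^{3}$ after taking the square root in the tensor norm.
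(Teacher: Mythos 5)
Your argument is correct and is in substance the paper's own proof: the paper packages your region decomposition, Taylor expansion with the null identities, and weighted Cauchy--Schwarz by rewriting $\mathbf{R}_{1}(g)=\sum_{k=1}^{m}C^{k}_{m}\langle{\bf \Gamma}(\nabla^{k}_{\mathcal{H}_{+}}\sqrt{\mu},\nabla^{m-k}_{\mathcal{H}_{+}}g),\nabla^{m}_{\mathcal{H}_{+}}g\rangle$ and invoking Proposition \ref{3-estimate} with $f=\nabla^{k}_{\mathcal{H}_{+}}\sqrt{\mu}$, the prefactor $k\sqrt{k!}$ then coming from $\|\nabla^{k}_{\mathcal{H}_{+}}\sqrt{\mu}\|_{L^2}^{2}=\tfrac{(k+1)(k+2)}{2}k!$, which is exactly your Hermite orthonormality identity combined with the $O(k^{2})$ multi-index count. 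The only difference is organizational (you re-run the proof of Proposition \ref{3-estimate} via a pointwise bound on $\nabla^{k}\sigma^{ij}$ rather than citing it); there is no gap.
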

\begin{proof}
For the term $\mathbf{R}_{1}(g)$,  by using the new Leibniz formula \eqref{leibniz-formula2}
\begin{align*}
\mathbf{R}_{1}(g)&=\sum^m_{k=1}C^{k}_{m}\sum_{i,j=1}^{3}
\int_{\mathbb{R}^3}\phi\ast(\sqrt{\mu}\nabla^{k}_{\mathcal{H}_{+}}\sqrt{\mu})A_{-,j}\nabla^{m-k}_{\mathcal{H}_{+}}gA_{-,i}\nabla^{m}_{\mathcal{H}_{+}}gdv\\
&=\sum^m_{k=1}C^{k}_{m}\langle {\bf \Gamma}(\nabla^{k}_{\mathcal{H}_{+}}\sqrt{\mu}, \nabla^{m-k}_{\mathcal{H}_{+}}g),\nabla^{m}_{\mathcal{H}_{+}}g\rangle.
\end{align*}
Setting $f=\nabla^{k}_{\mathcal{H}_{+}}\sqrt{\mu}$ in Proposition \ref{3-estimate} to get
\begin{align*}
|\mathbf{R}_{1}(g)|
\lesssim \sum^m_{k=1}C^{k}_{m}\|\nabla^{k}_{\mathcal{H}_{+}}\sqrt{\mu}\|_{L^2(\mathbb{R}^3)}\||\nabla^{m-k}_{\mathcal{H}_{+}}g|\|_{\sigma}\||\nabla^{m}_{\mathcal{H}_{+}}g|\|_{\sigma}.
\end{align*}
Since $\sqrt{\mu}=\psi_0 $  and
$$A_+^{\beta}\sqrt{\mu}=\sqrt{\beta!}\psi_{\beta}$$
where $\{\psi_{\alpha}\}_{\alpha\in \mathbb{N}^3}$ are the orthonormal basis in $L^2(\mathbb{R}^3)$, then from the definition \eqref{multi-indices}, we have
\begin{align*}
\|\nabla^{k}_{\mathcal{H}_{+}}\sqrt{\mu}\|^2_{L^2(\mathbb{R}^3)}&=\sum_{|\beta|=k}\frac{k!}{\beta!}\|A_+^{\beta}\sqrt{\mu}\|^2_{L^2(\mathbb{R}^3)}\\
&=\sum_{|\beta|=k}\frac{k!}{\beta!}\beta!\|\psi_{\beta}\|^2_{L^2(\mathbb{R}^3)}=\sum_{|\beta|=k}k!=\frac{(k+1)(k+2)k!}{2},
\end{align*}
where the number of mult-indices of $|\beta|=k$ is $\frac{(k+1)(k+2)}{2}$.
It follows that
$$|\mathbf{R}_{1}(g)|\lesssim\sum^m_{k=1}k C^{k}_{m}\sqrt{k!}\| |\nabla^{m-k}_{\mathcal{H}_{+}}g|\|_{\sigma}\|| \nabla^{m}_{\mathcal{H}_{+}}g|\|_{\sigma}.$$
\end{proof}

For the term $\mathbf{R}_{2}(g)$, we need the following estimate.
\begin{lemma}\label{R-2}
For $g\in\mathcal{S}(\mathbb{R}^3)$,  we have
\begin{equation*}
\begin{split}
 |\mathbf{R}_{2}(g)|\lesssim\sum^{m-1}_{k=1} C^{k}_{m}k(m-k)\sqrt{k!}\||\nabla^{m-k-1}_{\mathcal{H}_{+}}g|\|_{\sigma}\||\nabla^{m}_{\mathcal{H}_{+}}g|\|_{\sigma}.
\end{split}
\end{equation*}
\end{lemma}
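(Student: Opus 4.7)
The plan is to parallel the argument of Lemma \ref{R-1}. The starting point is the convolution identity $\nabla^k\sigma^{ij}=(-1)^k\phi^{ij}*(\sqrt{\mu}\nabla^k_{\mathcal{H}_+}\sqrt{\mu})$ from the remark after Lemma \ref{leibniz}. Substituting into the definition of $\mathbf{R}_2(g)$, the alternating signs cancel and
\begin{equation*}
\mathbf{R}_2(g)=-\sum_{k=1}^{m-1}C^k_m(m-k)\sum_{i,j=1}^3\int_{\mathbb{R}^3}\bigl[\phi^{ij}*(\sqrt{\mu}\nabla^k_{\mathcal{H}_+}\sqrt{\mu})\bigr]\,\nabla^{m-k-1}_{\mathcal{H}_+}g\,A_{-,i}\nabla^m_{\mathcal{H}_+}g\,dv.
\end{equation*}
Each summand has the structure of a trilinear form $\int[\phi^{ij}*(\sqrt{\mu}F)]\,G\,A_{-,i}H\,dv$ with $F=\nabla^k_{\mathcal{H}_+}\sqrt{\mu}$, $G=\nabla^{m-k-1}_{\mathcal{H}_+}g$, and $H=\nabla^m_{\mathcal{H}_+}g$. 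This is close to, but not quite, the $\Gamma_1$-expression estimated in Proposition \ref{3-estimate}, since no $A_{+,j}$ acts on the middle factor $G$.

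To bound each summand by $\|F\|_{L^2}\,\||G|\|_\sigma\,\||H|\|_\sigma$, I would rerun the three-region decomposition $\{|v|\leq 1\}$, $\{2|w|\geq|v|,|v|\geq1\}$, $\{2|w|\leq|v|,|v|\geq1\}$ used in Proposition \ref{3-estimate}. In the first two regions, Cauchy-Schwarz combined with Lemma \ref{covo} and the pointwise bound $|\phi^{ij}(v-w)|\lesssim|v-w|^{\gamma+2}$ yield the desired estimate, where the $\langle v\rangle^{\gamma+2}$ weight is absorbed onto $G$ via $\|\langle v\rangle^{(\gamma+2)/2}g\|_{L^2}\leq C\||g|\|_\sigma$ from \eqref{definition2}. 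In the third region I would Taylor-expand $\phi^{ij}(v-w)$ at $v$ and exploit the identities $\sum_i\phi^{ij}(v)v_i=0$ and $\sum_{i,j}\partial_k\phi^{ij}(v)v_iv_j=0$ together with the $\mathbf{P}_v/(\mathbf{I}-\mathbf{P}_v)$ decomposition, exactly as in the proof of Proposition \ref{3-estimate}. The absence of an outer $A_{+,j}$ on $G$ in fact simplifies the weight bookkeeping in that proof.

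Finally, I would substitute the Hermite $L^2$ bound already computed in the proof of Lemma \ref{R-1}: using $A_+^\beta\sqrt{\mu}=\sqrt{\beta!}\psi_\beta$ and the orthonormality of $\{\psi_\beta\}$, one has $\|\nabla^k_{\mathcal{H}_+}\sqrt{\mu}\|_{L^2}^2=\tfrac{(k+1)(k+2)}{2}k!$, hence $\|\nabla^k_{\mathcal{H}_+}\sqrt{\mu}\|_{L^2}\lesssim k\sqrt{k!}$. Summing over $k$ with the binomial weights $C^k_m$ and the commutator factor $(m-k)$ produces the claimed estimate. I expect the main obstacle to be the third-region Taylor argument, which must be performed carefully so that the tensor indices of $\nabla^k_{\mathcal{H}_+}\sqrt{\mu}$ interact correctly with those of $\nabla^{m-k-1}_{\mathcal{H}_+}g$ and $\nabla^m_{\mathcal{H}_+}g$ while preserving the $\mathbf{P}_v$-cancellations; the remainder of the proof is a direct reuse of tools already developed in the paper.
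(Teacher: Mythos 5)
Your proposal follows essentially the same route as the paper's proof: rewrite $\mathbf{R}_{2}$ as a trilinear form via $\nabla^{k}\sigma^{ij}=(-1)^{k}\phi^{ij}\ast(\sqrt{\mu}\,\nabla^{k}_{\mathcal{H}_{+}}\sqrt{\mu})$, run the three-region decomposition of Proposition \ref{3-estimate}, and insert the Hermite bound $\|\nabla^{k}_{\mathcal{H}_{+}}\sqrt{\mu}\|_{L^2(\mathbb{R}^3)}\lesssim k\sqrt{k!}$. The only (harmless) deviation is in the region $\{2|w|\leq|v|,\ |v|\geq1\}$: the paper stops at a first-order Taylor expansion and uses only $\sum_{i}\phi^{ij}(v)v_{i}=0$ applied to the factor $A_{-,i}\nabla^{m}_{\mathcal{H}_{+}}g$, which already closes the estimate because the undifferentiated middle factor $\nabla^{m-k-1}_{\mathcal{H}_{+}}g$ carries the full $\langle v\rangle^{\frac{\gamma+2}{2}}$ weight in the $\sigma$-norm, so the second identity $\sum_{i,j}\partial_{k}\phi^{ij}(v)v_{i}v_{j}=0$ that you invoke is neither needed nor directly applicable here (the index $j$ is not contracted against $v_{j}$ in this term).
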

\begin{proof}
In fact, the integration
\begin{align*}
&(-1)^k\int_{\mathbb{R}^3}\nabla^{k}\sigma^{ij}
\nabla^{m-k-1}_{\mathcal{H}_{+}}gA_{-,i}\nabla^{m}_{\mathcal{H}_{+}}gdv\\
&=\iint_{\mathbb{R}^3\times \mathbb{R}^3}\phi^{ij}(v-w)\sqrt{\mu}(w)\nabla^{k}_{\mathcal{H}_{+}}\sqrt{\mu}(w)\nabla^{m-k-1}_{\mathcal{H}_{+}}g(v)
A_{-,i}\nabla^{m}_{\mathcal{H}_{+}}(v)dwdv.
\end{align*}
We decompose the integration region $[v,w]\in \mathbb{R}^3\times \mathbb{R}^3$ into three parts:
$$
\{| v|\leq1\},\ \  \{2| w|\geq| v|,| v|\geq1\},\ \ and\ \  \{2| w|\leq| v|,| v|\geq1\}.
$$
For the first part $\{| v|\leq1\}$ and the second part $\{2| w|\geq| v|,| v|\geq1\}$,
similar as the estimate in Lemma \ref{3-estimate}, one can verify that
\begin{align*}
&|
\iint_{\mathbb{R}^3\times \mathbb{R}^3}\phi^{ij}(v-w)\sqrt{\mu}(w)\nabla^{k}_{\mathcal{H}_{+}}\sqrt{\mu}(w)\nabla^{m-k-1}_{\mathcal{H}_{+}}g(v)
A_{-,i}\nabla^{m}_{\mathcal{H}_{+}}(v)dwdv|\\
&\lesssim k\sqrt{k!}\int_{\mathbb{R}^3}
\langle v\rangle^{\gamma+1}|\nabla^{m-k-1}_{\mathcal{H}_{+}}g
A_{-,i}\nabla^{m}_{\mathcal{H}_{+}}g| dv\\
&\leq k\sqrt{k!}\|\langle v\rangle^{\frac{\gamma+2}{2}}\nabla^{m-k-1}_{\mathcal{H}_{+}}g\|_{L^2}\|\langle v\rangle^{\frac{\gamma}{2}}A_{-,i}\nabla^{m-k-1}_{\mathcal{H}_{+}}g\|_{L^2}\\
&\lesssim k\sqrt{k!}\||\nabla^{m-k-1}_{\mathcal{H}_{+}}g|\|_{\sigma}\||\nabla^{m}_{\mathcal{H}_{+}}g|\|_{\sigma}.
\end{align*}
Now we consider the third part $\{2| w|\leq| v|,| v|\geq1\}.$ Expanding $\phi^{ij}(v-w)$ to get
$$
\phi^{ij}(v-w)=\phi^{ij}(v)+\sum_{l=1}^{3}\left(\int_{0}^{1}\partial_{l}\phi^{ij}(v-sw)ds\right)w_{l}.
$$
Since
$$
\sum_{i=1}^{3}\phi^{ij}v_{i}=0,
$$
we have
\begin{align*}
&(-1)^k\sum_{i=1}^{3}
\iint_{\mathcal{I}}\nabla^{k}\sigma^{ij}
\nabla^{m-k-1}_{\mathcal{H}_{+}}gA_{-,i}\nabla^{m}_{\mathcal{H}_{+}}gdv\\
=&\sum_{i=1}^{3}\iint_{\mathcal{I}}\phi^{ij}(v)\sqrt{\mu}(w)\nabla^{k}_{\mathcal{H}_{+}}\sqrt{\mu}(w)\nabla^{m-k-1}_{\mathcal{H}_{+}}g(v)
\Big((\mathbf{I}-\mathbf{P}_{v})A_{-,i}\nabla^{m}_{\mathcal{H}_{+}}g(v)\Big)dwdv\\
+&\sum_{k=1}^{3}\sum_{i=1}^{3}\int_{0}^{1}\iint_{\mathcal{I}}w_{l}\partial^{\beta}\mu(w)\partial_{l}\phi^{ij}(v-sw)\nabla^{m-k-1}_{\mathcal{H}_{+}}g(v)
A_{-,i}\nabla^{m}_{\mathcal{H}_{+}}g(v)dwdvds
\end{align*}
where $\mathcal{I}=\{2| w|\leq| v|,| v|\geq1\}$.  It follows from the inequality \eqref{v-s}, the norm inequalities \eqref{definition2} and  \eqref{definition3} that
\begin{align*}
|\mathbf{R}_{2}(g)|\lesssim \sum^{m-1}_{k=1} C^{k}_{m}k(m-k)\sqrt{k!}\||\nabla^{m-k-1}_{\mathcal{H}_{+}}g|\|_{\sigma}\||\nabla^{m}_{\mathcal{H}_{+}}g|\|_{\sigma}.
\end{align*}
We end the proof of Lemma \ref{R-2}.
\end{proof}
Substituting the estimates of $\mathbf{R}_{0}, \mathbf{R}_{1}$ and $\mathbf{R}_{2}$ into the decomposition \eqref{linear-decomp}, we end the proof of Proposition \ref{multiply-estimate}.

\section{Gelfand-Shilov smoothing effect for Landau equation}\label{S5}
Now we prepare to prove Theorem \ref{trick} by induction.

Let $g$ be the solution of Cauchy problem \eqref{eq-1}, that is,
$$
\partial_{t}g=-\mathcal{L}g+\mathbf{\Gamma}(g,g),\quad g|_{t=0}=g_0.
$$
By using the estimate in Proposition \ref{multiply-estimate},  Corollary \ref{K-estimate} and Proposition \ref{3-estimate} with $\alpha=0$, we have
\begin{align*}
\frac{d}{dt}\|g\|_{L^{2}(\mathbb{R}^{3})}^{2}
&=2(\partial_{t}g,g)_{L^{2}(\mathbb{R}^{3})}\\
&=
-2(\mathcal{L}_1g,g)_{L^{2}(\mathbb{R}^{3})}-2(\mathcal{L}_2g,g)_{L^{2}(\mathbb{R}^{3})}+2({\bf \Gamma}(g, g),g)_{L^{2}(\mathbb{R}^{3})}\\
&\leq-2\||g|\|_{\sigma}^{2}+2C_{0}\Big(\| g\|_{2,\frac{\gamma}{2}}+\|g\|_{L^2(\mathbb{R}^3)}\Big)\||g|\|_{\sigma}+2C_{0}\|g\|_{L^2(\mathbb{R}^3)}\||g|\|_{\sigma}^{2}.
\end{align*}
For $\epsilon_0$ small enough, using \eqref{1.7+01},
\begin{equation}\label{small}
\|g\|_{L^2(\mathbb{R}^3)}\leq \epsilon_0\leq \frac{1}{8C_0},
\end{equation}
then it follows that
$$
\frac{d}{dt}\|g\|_{L^{2}(\mathbb{R}^3)}^{2}+
\frac{7}{4}\||g|\|_{\sigma}^{2}\leq2C_{0}\Big(\| g\|_{2,\frac{\gamma}{2}}+\|g\|_{L^2(\mathbb{R}^3)}\Big)\||g|\|_{\sigma}.
$$
For $\gamma\ge0$, by using H\"older's inequality and the inequality \eqref{definition2}, we have
\begin{align}
\| g\|_{2,\frac{\gamma}{2}}^{2}&\leq C_{\gamma, \delta}\|g\|^2_{L^2}+\delta\|\langle v\rangle^{\frac{\gamma+2}{2}} g\|^2_{L^2(\mathbb{R}^3)}\nonumber\\
&\leq C_{\gamma,\delta}\|g\|^2_{L^2}+\delta\||g|\|^2_{\sigma},\label{interp-1}
\end{align}
Set $\delta$ small, such that
$$
\frac{d}{dt}\|g\|_{L^{2}(\mathbb{R}^3)}^{2}+
\||g|\|_{\sigma}^{2}\leq\left(4C^2_0+2C_{0}C_{\gamma}\right)\| g\|_{L^{2}(\mathbb{R}^3)}^{2}.
$$
By using the Gronwall inequality, for any $T>0$ and $0<t<T$, we have
\begin{equation*}
\|g(t)\|_{L^{2}(\mathbb{R}^3)}^{2}+\int_{0}^{t}\| |g(\tau)|\|_{\sigma}^{2}d\tau\leq e^{\left(4C^2_0+2C_{0}C_{\gamma}\right)t}\| g_{0}\|_{L^{2}(\mathbb{R}^3)}^{2}.
\end{equation*}
So that \eqref{1.7+1} hold true for $\alpha=0$ with
\begin{equation}\label{C-0}
C^2\ge e^{\left(4C^2_0+2C_{0}C_{\gamma}\right)}\| g_{0}\|_{L^{2}(\mathbb{R}^3)}^{2}.
\end{equation}
Moreover, we can find that
\begin{align*}
&\frac{d}{dt}\|t^{\frac{1}{2}}\nabla_{\mathcal{H}_{+}} g\|_{L^{2}(\mathbb{R}^3)}^{2}\\
&=2\langle \nabla_{\mathcal{H}_{+}}\partial_{t}g,
t\nabla_{\mathcal{H}_{+}} g\rangle
+\|\nabla_{\mathcal{H}_{+}} g\|_{L^{2}(\mathbb{R}^3)}^{2}\\
&=2\langle \nabla_{\mathcal{H}_{+}}(- \mathcal{L} g+\Gamma (g, g)),
t\nabla_{\mathcal{H}_{+}} g\rangle
+\|\nabla_{\mathcal{H}_{+}} g\|_{L^{2}(\mathbb{R}^3)}^{2},
\end{align*}
and recalled the definition \eqref{multi-indices}, we can deduce from Proposition \ref{multiply-estimate}, Proposition \ref{trilinear1} and Corollary \ref{K-estimate} that
\begin{align*}
\frac{d}{dt}\|t^{\frac{1}{2}}\nabla_{\mathcal{H}_{+}} g\|_{L^{2}(\mathbb{R}^3)}^{2}
&\leq-2\||t^{\frac{1}{2}}\nabla_{\mathcal{H}_{+}} g|\|_{\sigma}^{2}+4C_{0}t\| |g|\|_{\sigma}\||\nabla_{\mathcal{H}_{+}} g|\|_{\sigma}+\|\nabla_{\mathcal{H}_{+}} g\|_{L^{2}(\mathbb{R}^3)}^{2}\\
&\quad +2C_0t\|g\|_{L^2( \mathbb{R}^3)}\||\nabla_{\mathcal{H}_{+}} g|\|^2_{\sigma}+2C_0t\|g\|_{L^2( \mathbb{R}^3)}\|| g|\|_{\sigma}\||\nabla_{\mathcal{H}_{+}} g|\|_{\sigma}\\
&\quad+2C_0t\|g\|_{L^2( \mathbb{R}^3)}\||\nabla_{\mathcal{H}_{+}} g|\|_{\sigma}+2C_0t\|g\|_{L^2( \mathbb{R}^3)}\||\nabla_{\mathcal{H}_{+}} g|\|_{\sigma}.
\end{align*}
Since for $\gamma\ge0$,
$$\|\nabla_{\mathcal{H}_{+}} g\|_{L^{2}(\mathbb{R}^3)}^{2}=\sum_{|\alpha|=1}\|A_+^{\alpha} g\|_{L^{2}(\mathbb{R}^3)}^{2}\leq \sum_{|\alpha|=1}\|\langle v\rangle^{\frac{\gamma}{2}} A_+^{\alpha} g\|_{L^{2}(\mathbb{R}^3)}^{2}\leq \frac{1}{C_1}\| |g|\|^2_{\sigma}, $$
by using the assumption \eqref{small}, we can deduce from the  Cauchy-Schwarz's inequality  that
\begin{align*}
\frac{d}{dt}\|t^{\frac{1}{2}}A_+^{\alpha} g\|_{L^{2}(\mathbb{R}^3)}^{2}+\||t^{\frac{1}{2}}A_+^{\alpha} g|\|_{\sigma}^{2}\leq 100C_0^2t\||g|\|^2_{\sigma}+\frac{1}{C_1}\| |g|\|^2_{\sigma}+t\|| g|\|^2_{\sigma}.
\end{align*}
For $0<t\le 1$, one can verify that
\begin{align*}
&\|t^{\frac{1}{2}}\nabla_{\mathcal{H}_{+}} g(t)\|_{L^{2}(\mathbb{R}^3)}^{2}
+\int_{0}^{t}\||s^{\frac{1}{2}}\nabla_{\mathcal{H}_{+}} g(s)|\|_{\sigma}^{2}ds\\
&\leq \left(100C_0^2+\frac{1}{C_1}+1\right)\int^{t}_0\| |g(s)|\|^2_{\sigma}ds\\
&\leq \left(100C_0^2+\frac{1}{C_1}+1\right)e^{\left(4C^2_0+2C_{0}C_{\gamma}\right)}\| g_{0}\|_{L^{2}(\mathbb{R}^3)}^{2}.
\end{align*}
Set
\begin{equation}\label{C-1}
C^2\ge 1000 \max\left\{e^{\left(4C^2_0+2C_{0}C_{\gamma}\right)}\| g_{0}\|_{L^{2}(\mathbb{R}^3)}^{2}, 100C_0^2+\frac{1}{C_1}+1\right\}\gg1000,
\end{equation}
we have, for $0<t\le 1$,
\begin{equation*}
\|t^{\frac{1}{2}}\nabla_{\mathcal{H}_{+}} g(t)\|_{L^{2}(\mathbb{R}^3)}^{2}
+\int_{0}^{t}\||s^{\frac{1}{2}}\nabla_{\mathcal{H}_{+}} g(s)|\|_{\sigma}^{2}ds\leq C^2.
\end{equation*}
\begin{proposition}\label{hard}
Let $g$ be the smooth solution of the Cauchy problem \eqref{eq-1} with the assumption \eqref{small}, then there exists
$C>0$ such that for any $ n\in \mathbb{N}_+$ and $0<t\le 1$,
\begin{equation}\label{Assumption}
\|{t}^{\frac{n}{2}}\nabla^n_{\mathcal{H}_{+}} g({t})\|_{L^{2}(\mathbb{R}^3)}^{2}
+\int_{0}^{{t}}\||\tau^{\frac{n}{2}}\nabla^n_{\mathcal{H}_{+}} g(\tau)|\|_{\sigma}^{2}d\tau\leq C^{2n}((n-1)!)^{2}.
\end{equation}
\end{proposition}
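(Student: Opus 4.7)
The plan is to proceed by induction on $n$. The base cases $n=0$ (the $L^2$ energy estimate) and $n=1$ are already established in the computations immediately preceding the Proposition. Fix $n\ge 2$, assume \eqref{Assumption} holds at all levels $1\le k\le n-1$, and derive it at level $n$.

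Starting from \eqref{eq-1} and expanding the time derivative of the weighted norm gives
\begin{align*}
\frac{d}{dt}\|t^{n/2}\nabla^n_{\mathcal{H}_+}g\|_{L^2}^2
&=nt^{n-1}\|\nabla^n_{\mathcal{H}_+}g\|_{L^2}^2-2t^n\langle\nabla^n_{\mathcal{H}_+}\mathcal{L}_1g,\nabla^n_{\mathcal{H}_+}g\rangle\\
&\quad-2t^n\langle\nabla^n_{\mathcal{H}_+}\mathcal{L}_2g,\nabla^n_{\mathcal{H}_+}g\rangle+2t^n\langle\nabla^n_{\mathcal{H}_+}\Gamma(g,g),\nabla^n_{\mathcal{H}_+}g\rangle.
\end{align*}
I apply the coercivity of $\mathcal{L}_1$ (Proposition \ref{multiply-estimate}) to the first inner product, Corollary \ref{K-estimate} (with $f=g$) to the second, and Proposition \ref{trilinear1} (with $f=g$) to the nonlinear one. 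The leading $-2\||t^{n/2}\nabla^n_{\mathcal{H}_+}g|\|_\sigma^2$ dominates, and the $k=0$ piece $2C_0t^n\|g\|_{L^2}\||\nabla^n_{\mathcal{H}_+}g|\|_\sigma^2$ arising from $\Gamma(g,g)$ is absorbed using the smallness \eqref{small}, leaving a coercive coefficient of order $1$ on the left.

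The surviving right-hand side splits into three pieces: (i) the geometric term $nt^{n-1}\|\nabla^n_{\mathcal{H}_+}g\|_{L^2}^2$; (ii) commutator remainders from Proposition \ref{multiply-estimate} of the form $C^k_n\sqrt{k!}\||\nabla^{n-k}_{\mathcal{H}_+}g|\|_\sigma\||\nabla^n_{\mathcal{H}_+}g|\|_\sigma$ and similar with $\nabla^{n-k-1}_{\mathcal{H}_+}g$ for $k\ge 1$; and (iii) cross terms from $\mathcal{L}_2$ and $\Gamma(g,g)$ of the schematic form $C^k_n\|\nabla^{k-1}_{\mathcal{H}_+}g\|_{L^2}\||\nabla^{n-k}_{\mathcal{H}_+}g|\|_\sigma\||\nabla^n_{\mathcal{H}_+}g|\|_\sigma$. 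For (i), the bound $\|\nabla^n_{\mathcal{H}_+}g\|_{L^2}^2\le C_1^{-1}\||\nabla^{n-1}_{\mathcal{H}_+}g|\|_\sigma^2$ (from \eqref{definition3}) rewrites it as $\tfrac{n}{C_1}t^{n-1}\||\nabla^{n-1}_{\mathcal{H}_+}g|\|_\sigma^2$, whose time integral is controlled by the inductive hypothesis via $\tfrac{n}{C_1}C^{2(n-1)}((n-2)!)^2$. For (ii) and (iii), I factor $t^n=t^{k/2}\cdot t^{(n-k)/2}\cdot t^{n/2}$ (or the analogue for the $\nabla^{n-k-1}_{\mathcal{H}_+}g$ remainder), use $t^{k/2}\le 1$ on $[0,1]$, and combine Young's and Cauchy-Schwarz in time: one factor absorbs a small multiple of $\int_0^t\||s^{n/2}\nabla^n_{\mathcal{H}_+}g|\|_\sigma^2\,ds$ on the left, while the other factor is estimated by the inductive hypothesis at level $n-k$ (or, for (iii), the $L^\infty_t$ bound at level $k-1$).

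The main obstacle is combinatorial. Using the identity
\[
\frac{C^k_n\sqrt{k!}\,(n-k-1)!}{(n-1)!}=\frac{n}{\sqrt{k!}\,(n-k)},
\]
the decisive sum arising from (ii) reduces, after division by the target $C^{2n}((n-1)!)^2$, to $\sum_{k=1}^{n-1}\frac{k^2n^2}{k!(n-k)^2C^{2k}}$; splitting into $k\le n/2$ (where $(n-k)^{-1}\lesssim n^{-1}$ and the sum collapses to $\sum k^2/(k!C^{2k})$) and $k>n/2$ (where $k!$ grows factorially and dominates every polynomial in $n$), one verifies the series is bounded by $O(C^{-2})$ uniformly in $n$. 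The parallel sums from the $\nabla^{n-k-1}_{\mathcal{H}_+}g$ remainders of (ii) and from (iii) admit the same treatment; the endpoint cases $k=n$ and $k=n-1$, where the inductive hypothesis at a non-positive level is unavailable, are handled separately using the already-established base-case bounds $\int_0^t\||g|\|_\sigma^2\,ds\le C^2$ and $\sup_s\|s^{1/2}\nabla_{\mathcal{H}_+}g(s)\|_{L^2}\le C$. Enlarging $C$ to dominate every combinatorial coefficient as well as the base-case constants \eqref{C-0}--\eqref{C-1}, time integration from $0$ to $t$ yields \eqref{Assumption} at level $n$ and closes the induction.
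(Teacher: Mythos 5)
Your proposal is correct and follows essentially the same route as the paper: the same energy identity, the same three inputs (Proposition \ref{multiply-estimate}, Corollary \ref{K-estimate}, Proposition \ref{trilinear1}), absorption of the top-order $\sigma$-norm via smallness and Young's inequality, the bound $\|\nabla^n_{\mathcal{H}_+}g\|_{L^2}^2\le C_1^{-1}\||\nabla^{n-1}_{\mathcal{H}_+}g|\|_\sigma^2$ for the geometric term, and the identity $C^k_n\sqrt{k!}\,(n-k-1)!/(n-1)!=n/(\sqrt{k!}\,(n-k))$ driving the uniform combinatorial bounds. Your explicit splitting of the sum at $k=n/2$ and your separate treatment of the endpoint indices merely make precise what the paper asserts numerically.
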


\begin{proof}
In fact, we have proved that the assumption \eqref{Assumption} holds for $n=1$.
Now take take $m\ge 2$, and assume that the assumption \eqref{Assumption} holds true for $n\leq m-1$, we need to prove that the validity of \eqref{Assumption} for $n=m$. Using the equation \eqref{eq-1}, and $g\in
C^\infty(]0, +\infty[, \mathcal{S}(\mathbb{R}^3))$ is a smooth solution of the Cauchy problem \eqref{eq-1}, we have
\begin{align*}
\frac{d}{dt}\|t^{\frac{m}{2}}\nabla^m_{\mathcal{H}_{+}} g\|_{L^{2}(\mathbb{R}^3)}^{2}
=&2t^{m}\langle \nabla^m_{\mathcal{H}_{+}}\partial_{t}g,
\nabla^m_{\mathcal{H}_{+}} g\rangle+mt^{m-1}\|\nabla^m_{\mathcal{H}_{+}} g\|_{L^{2}(\mathbb{R}^3)}^{2}\\
=&-2t^{m}\langle \nabla^m_{\mathcal{H}_{+}} \mathcal{L}_1 g,
\nabla^m_{\mathcal{H}_{+}} g\rangle-2t^{m}\langle \nabla^m_{\mathcal{H}_{+}} \mathcal{L}_2 g,
\nabla^m_{\mathcal{H}_{+}} g\rangle\\
&+2t^{m}\langle \nabla^m_{\mathcal{H}_{+}} \Gamma(g, g),
\nabla^m_{\mathcal{H}_{+}} g\rangle
+mt^{m-1}\|\nabla^m_{\mathcal{H}_{+}} g\|_{L^{2}(\mathbb{R}^3)}^{2}.
\end{align*}
By using Proposition \ref{multiply-estimate} and the inequality \eqref{definition3}, we have
\begin{align*}
&\left(\nabla^m_{\mathcal{H}_{+}}\mathcal{L}_1g,\nabla^m_{\mathcal{H}_{+}}g\right)_{L^{2}(\mathbb{R}^{3})}\\
\geq&\| |\nabla^m_{\mathcal{H}_{+}}g|\|_{\sigma}^{2}-C_0\left(1+\frac{1}{C_1}\right)\sum^m_{k=1}k C^{k}_{m}\sqrt{k!}\| |\nabla^{m-k}_{\mathcal{H}_{+}}g|\|_{\sigma}\|| \nabla^{m}_{\mathcal{H}_{+}}g|\|_{\sigma}\\
&-C_0\sum^{m-1}_{k=1}k (m-k)C^{k}_{m}\sqrt{k!}\||\nabla^{m-k-1}_{\mathcal{H}_{+}}g|\|_{\sigma}\||\nabla^{m}_{\mathcal{H}_{+}}g|\|_{\sigma}.
\end{align*}
Using Corollary \ref{K-estimate} for the estimate of $\langle \nabla^{m}_{\mathcal{H}_{+}} \mathcal{L}_2 g,
\nabla^{m}_{\mathcal{H}_{+}} g\rangle$, and Proposition \ref{trilinear1} for the terms $\langle \nabla^{m}_{\mathcal{H}_{+}} \Gamma(g, g),
\nabla^{m}_{\mathcal{H}_{+}} g\rangle$, we get
\begin{align*}
&\frac{d}{dt}\|t^{\frac{m}{2}}\nabla^{m}_{\mathcal{H}_{+}} g\|_{L^{2}(\mathbb{R}^3)}^{2}+2\||t^{\frac{m}{2}}\nabla^{m}_{\mathcal{H}_{+}} g|\|_{\sigma}^{2}\\
&\quad\le mt^{m-1}\|\nabla^{m}_{\mathcal{H}_{+}} g\|_{L^{2}(\mathbb{R}^3)}^{2}\\
&\quad\quad+2C_0\left(1+\frac{1}{C_1}\right)t^{m}\sum^m_{k=1}k C^{k}_{m}\sqrt{k!}\| |\nabla^{m-k}_{\mathcal{H}_{+}}g|\|_{\sigma}\|| \nabla^{m}_{\mathcal{H}_{+}}g|\|_{\sigma}\\
&\quad\quad+2C_0t^{m}\sum^{m-1}_{k=1}k (m-k)C^{k}_{m}\sqrt{k!}\||\nabla^{m-k-1}_{\mathcal{H}_{+}}g|\|_{\sigma}\||\nabla^{m}_{\mathcal{H}_{+}}g|\|_{\sigma}\\
&\quad\quad+2t^{m}C_0^{m+1}\sqrt{m!}\|g\|_{L^2(\mathbb{R}^3)}\| \nabla^m_{\mathcal{H}_{+}}g\|_{\sigma}\\
&\qquad+2t^{m}\sum^m_{k=1}C^k_mC_0^{m-k+1}\sqrt{(m-k)!}\|\nabla^{k-1}_{\mathcal{H}_{+}}g\|_{L^2(\mathbb{R}^3)}\||\nabla^{m}_{\mathcal{H}_{+}}g|\|_{\sigma}\\
&\quad\quad+2C_0t^{m}\|g\|_{L^2(\mathbb{R}^3)}\||\nabla^{m}_{\mathcal{H}_{+}}g|\|^2_{\sigma}\\
&\quad\quad+ 2C_0t^m\sum^m_{k=1}C^k_m\|\nabla^{k-1}_{\mathcal{H}_{+}}g\|_{L^2(\mathbb{R}^3)}\||\nabla^{m-k}_{\mathcal{H}_{+}}g|\|_{\sigma}\||\nabla^{m}_{\mathcal{H}_{+}}g|\|_{\sigma},
\end{align*}
we have then, for all $0<t\le 1, m\ge 2$,
\begin{align}\label{5+110}
\|t^{\frac{m}{2}}\nabla^{m}_{\mathcal{H}_{+}} g\|_{L^{2}(\mathbb{R}^3)}^{2}
+2\int^t_0\|| \tau^{\frac{m}{2}} \nabla^{m}_{\mathcal{H}_{+}} g(\tau)|\|_{\sigma}^{2}d\tau\le \sum^7_{j=1} M_j.
\end{align}
For the term $M_1$, since $\gamma\ge0$, it follows from the inequality \eqref{definition3} that,
\begin{align*}
M_1=& m\ \int^t_0 \tau^{m-1}\|\nabla^{m}_{\mathcal{H}_{+}} g(\tau)\|_{L^{2}(\mathbb{R}^3)}^{2}d\tau\\
&\leq m\ \int^t_0 \tau^{m-1}\|\langle v\rangle^{\frac{\gamma}{2}}\nabla^{m}_{\mathcal{H}_{+}} g(\tau)\|_{L^{2}(\mathbb{R}^3)}^{2}d\tau\\
&\leq \frac{m}{C_1}\int^t_0 \tau^{m-1}\||\nabla^{m-1}_{\mathcal{H}_{+}} g(\tau)|\|^{2}_{\sigma}d\tau.
\end{align*}
Using the induction hypothesis \eqref{Assumption} for $m-1$,
we have
\begin{equation}\label{5+111}
M_1\le \frac{m C^{2m-2}}{C_1}((m-2)!)^2\leq \frac{2}{C_1}C^{2m-2}((m-1)!)^2.
\end{equation}
For the term $M_2$, we have
\begin{align*}
M_2=&2C_0\left(1+\frac{1}{C_1}\right)\sum^m_{k=1}k C^{k}_{m}\sqrt{k!}\int^t_0\tau^{m}\| |\nabla^{m-k}_{\mathcal{H}_{+}}g|\|_{\sigma}(\tau)\|| \nabla^{m}_{\mathcal{H}_{+}}g(\tau)|\|_{\sigma}d\tau\\
&\le 2C_0\left(1+\frac{1}{C_1}\right)\sum^m_{k=1}k C^{k}_{m}\sqrt{k!}\left(\int^t_0\| |\tau^{\frac{{|m-k|}}{2}}\nabla^{m-k}_{\mathcal{H}_{+}}g(\tau)|\|^2_{\sigma}d\tau\right)^{1/2}\\
&\qquad\qquad\qquad\qquad\qquad\qquad\times\left(\int^t_0\| |\tau^{\frac{m}{2}}\nabla^{m}_{\mathcal{H}_{+}}g(\tau)|\|^2_{\sigma}d\tau\right)^{1/2}
\\
&\le 2C_0\left(1+\frac{1}{C_1}\right)\sum^m_{k=1}k C^{k}_{m}\sqrt{k!}\left(C^{m-k}(m-k-1)!
\right)\\
&\qquad\qquad\qquad\qquad\qquad\qquad\times\left(\int^t_0\| |\tau^{\frac{m}{2}}\nabla^{m}_{\mathcal{H}_{+}}g(\tau)|\|^2_{\sigma}d\tau\right)^{1/2}\\
&\le \left(8C_0\left(1+\frac{1}{C_1}\right)\sum^m_{k=1}k C^{k}_{m}\sqrt{k!}C^{m-k}(m-k-1)!\right)^2\\
&\qquad\qquad\qquad\qquad\qquad\qquad+\frac 18\int^t_0\| |\tau^{\frac{m}{2}}\nabla^{m}_{\mathcal{H}_{+}}g(\tau)|\|^2_{\sigma}d\tau
\end{align*}
Since
$$
\sum^m_{k=1}\frac{C^{-k+1}m}{\sqrt{k!}(m-k)}\le 4,
$$
So that
\begin{equation}\label{5+112}
M_2\le (32C_0)^2 \left(1+\frac{1}{C_1}\right)^2C^{2m-2}((m-1)!)^2+\frac 18\int^t_0\| |\tau^{\frac{m}{2}}\nabla^{m}_{\mathcal{H}_{+}}g(\tau)|\|^2_{\sigma}d\tau.
\end{equation}
For the term $M_3$, we have
\begin{align*}
M_3=&2C_0\sum^{m-1}_{k=1}k (m-k)C^{k}_{m}\sqrt{k!}
\int^t_0\tau^{m}\|| \nabla^{m-k-1}_{\mathcal{H}_{+}}g|\|_{\sigma}\| |\nabla^{m}_{\mathcal{H}_{+}}g|\|_{\sigma}d\tau\\
&\le \Big(8C_0\sum^{m-1}_{k=1}k (m-k)C^{k}_{m}\sqrt{k!}C^{m-k-1}(m-k-2)!\Big)
\\
&\qquad+\frac 18\int^t_0\| |\tau^{\frac{m}{2}}\nabla^{m}_{\mathcal{H}_{+}}g(\tau)|\|^2_{\sigma}d\tau.
\end{align*}
Since
$$
\sum^{m-1}_{k=1}\frac{C^{-k}km}{\sqrt{k!}(m-k)}\le 8,
$$
So that
\begin{equation}\label{5+113}
M_3\le (64C_0 )^2 C^{2m-2}((m-1)!)^2+\frac 18\int^t_0\| |\tau^{\frac{m}{2}}\nabla^{m}_{\mathcal{H}_{+}}g(\tau)|\|^2_{\sigma}d\tau.
\end{equation}
For the term $M_4$, we have
\begin{align*}
M_4=&2\int^t_0\tau^{m}C_0^{m+1}\sqrt{m!}\|g\|_{L^2(\mathbb{R}^3)}
\||\nabla^{m}_{\mathcal{H}_{+}}g|\|_{\sigma}d\tau\\
&\le \left(4 C_0^{m+1}\sqrt{m!}\|g\|_{L^\infty([0, 1], L^2(\mathbb{R}^3))}\right)^2
+\frac{1}{8} \int^t_0\tau^{m}\||\nabla^{m}_{\mathcal{H}_{+}}g|\|^2_{\sigma}d\tau.
\end{align*}
By using the assumption \eqref{small}, it follows that
\begin{equation}\label{5+115}
M_4\le C_0^{2m}m!
+\frac{1}{8} \int^t_0\tau^{m}\||\nabla^{m}_{\mathcal{H}_{+}}g|\|^2_{\sigma}d\tau.
\end{equation}
For the term $M_5$, we have
\begin{align*}
M_5=&2\int^t_0\tau^{m}\sum^m_{k=1}C^k_mC_0^{m-k+1}\sqrt{(m-k)!}\|\nabla^{k-1}_{\mathcal{H}_{+}}g(\tau)\|_{L^2(\mathbb{R}^3)}\||\nabla^{m}_{\mathcal{H}_{+}}g(\tau)|\|_{\sigma}d\tau,
\end{align*}
then
\begin{equation*}
M_5\le \left(8\sum^m_{k=1}C^k_mC_0^{m-k+1}\sqrt{(m-k)!}C^{k-1}(k-2)!\right)^2
+\frac{1}{8} \int^t_0\tau^{m}\||\nabla^{m}_{\mathcal{H}_{+}}g|\|^2_{\sigma}d\tau.
\end{equation*}
since
$$
\sum^m_{k=1}\frac{m}{\sqrt{(m-k)!}k^2}\le 4,
$$
we have
\begin{equation}\label{5+117}
M_5\le (32 C_0)^2 C^{2m-2}((m-1)!)^2
+\frac{1}{8} \int^t_0\tau^{m}\||\nabla^{m}_{\mathcal{H}_{+}}g|\|^2_{\sigma}d\tau.
\end{equation}
For the term $M_6$,  using \eqref{small}, we have
\begin{equation}\label{5+114}
M_6=2C_0\int^t_0\tau^{m}\|g\|_{L^2(\mathbb{R}^3)}\||\nabla^{m}_{\mathcal{H}_{+}}g|\|^2_{\sigma}d\tau
\le \frac{1}{4} \int^t_0\tau^{m}\||\nabla^{m}_{\mathcal{H}_{+}}g|\|^2_{\sigma}d\tau
\end{equation}
For the term $M_7$, we have
\begin{align*}
&M_7=2C_0 \sum^m_{k=1}C^k_m
\int^t_0\tau^m \|\nabla^{k-1}_{\mathcal{H}_{+}}g\|_{L^2(\mathbb{R}^3)}\||\nabla^{m-k}_{\mathcal{H}_{+}}g|\|_{\sigma}\||\nabla^{m}_{\mathcal{H}_{+}}g|\|_{\sigma}d\tau\\
\le &2C_0 \sum^m_{k=1}C^k_m\|\tau^{\frac{k-1}{2}}\nabla^{k-1}_{\mathcal{H}_{+}}g\|_{L^\infty([0, 1], L^2(\mathbb{R}^3))}\\
&\qquad\times\left(\int^t_0\tau^{|m-k|}
\||\nabla^{m-k}_{\mathcal{H}_{+}}g|\|^2_{\sigma}d\tau\right)^{1/2}
\left(\int^t_0\tau^{m}\||\nabla^{m}_{\mathcal{H}_{+}}g|\|_{\sigma}d\tau\right)^{1/2}\\
\le &2C_0 \sum^m_{k=1}C^k_m\left(C^{m-1}(k-2)!(m-k-1)!\right)
\left(\int^t_0\tau^{|\alpha|}\||\nabla^{m}_{\mathcal{H}_{+}}g|\|_{\sigma}d\tau\right)^{1/2}\\
\le &
\left(16C_0\Big(1+\sum^{m-1}_{k=1}\frac{m}{(m-k)k^2}\Big)\right)^2C^{2m-2}((m-1)!)^2
+\frac{1}{8}\int^t_0\tau^{|\alpha|}\||\nabla^{m}_{\mathcal{H}_{+}}g|\|_{\sigma}d\tau.
\end{align*}
Since
$$
\sum^{m-1}_{k=1}\frac{m}{(m-k)k^2}\le 8,
$$
so that
\begin{equation}\label{5+116}
M_7\le \left(144C_0 \right)^2
C^{2m-2}((m-1)!)^2
+\frac{1}{8} \int^t_0\tau^{m}\||\nabla^{m}_{\mathcal{H}_{+}}g|\|^2_{\sigma}d\tau.
\end{equation}
then combine \eqref{5+110}-\eqref{5+116},  we get then, for $0<t\le 1, m\ge 2$,
\begin{align*}
&\|t^{\frac{m}{2}}\nabla^{m}_{\mathcal{H}_{+}} g\|_{L^{2}(\mathbb{R}^3)}^{2}
+\int^t_0\|| \tau^{\frac{m}{2}} \nabla^{m}_{\mathcal{H}_{+}} g(\tau)|\|_{\sigma}^{2}d\tau\\
&\le 8\left(1+\frac{1}{C_1}\right)^2(144 C_0)^2 C^{2m-2}((m-1)!)^2.
\end{align*}
Choose the $C$ satisfies \eqref{C-0},  \eqref{C-1} and
$$
C^2\ge 8 \left(1+\frac{1}{C_1}\right)^2(144 C_0)^2  ,
$$
we end the proof of Proposition \ref{hard}.
\end{proof}
From the result of Proposition \ref{hard},  we end the proof of Theorem \ref{trick} for $0<t\le1$.
Once we get the analytical Gelfand-Shilov of $g$ at $t=1$, then under the global smallness assumption
\eqref{1.7+01}, the proof of the propagation of analytical Gelfand-Shilov to times interval $[1, 2]$ is exactly same as the proof of Proposition \ref{hard} without the initial datum cut-off factor $t^m$, the same argument to prove the analytical Gelfand-Shilov smooth of $g$ on $[k, k+1]$ for any $k\in\mathbb{N}$.

\section{Appendix}\label{Appendix}
\subsection*{Hermite functions}
The standard Hermite functions $(\varphi_{n})_{n\in \mathbb{N}}$ are defined for $v\in\mathbb{R}$,
\begin{align*}
\varphi_{n}(v)=\frac{(-1)^n}{\sqrt{2^nn!\sqrt{\pi}}}e^{\frac{v^2}{2}}\frac{d^n}{dv^n}(e^{-\frac{v^2}{2}})
=-\frac{1}{\sqrt{2^nn!\sqrt{\pi}}}(v-\frac{d}{dv})^n(e^{-\frac{v^2}{2}})=\frac{a_+^n\varphi_{0}}{\sqrt{n!}},
\end{align*}
where $a_+$ is the creation operator
$$
a_+=\frac{1}{\sqrt{2}}\Big(v-\frac{d}{dv}\Big).
$$
The family $(\varphi_{n})_{n\in \mathbb{N}}$ is an orthonormal basis of $L^2(\mathbb{R})$. we set for $n\geq0$, $\alpha=(\alpha_1,\alpha_2,\alpha_3)\in \mathbb{N}^3$, $x\in \mathbb{R}$,$v\in \mathbb{R}^3$,\\
  \begin{equation*}
  \phi_{n}(x)=2^{-1/4}\varphi_{n}(2^{-1/2}x),\ \ \ \
 \phi_{n}=\frac{1}{\sqrt{n!}}\left(\frac{x}{2}-\frac{d}{dx}\right)^{n}\phi_{0},
 \end{equation*}
 \begin{equation*}
 \psi_{\alpha}(v)=\prod_{j=1}^{3}\phi_{\alpha_{j}}(v_j),\ \ \ \
\mathcal{E}_{k}=Span({\psi_{\alpha}})_{\alpha\in N^3,|\alpha|=k},
 \end{equation*}
with $|\alpha|=\alpha_{1}+\alpha_{2}+\alpha_3$.  The family $(\psi_{\alpha})_{\alpha\in \mathbb{N}^3}$ is an orthonormal basis of $ L^{2}(\mathbb{R}^3)$ composed by the eigenfunctions of the 3-dimensional harmonic oscillator
 \begin{equation*}
\mathcal{H}=-\Delta_{v}+\frac{|v^{2}|}{4}=\sum_{k\geq0}(k+\frac{3}{2})\mathbb{P}_{k},\quad \mathbf{Id}=\sum_{k\geq0}\mathbb{P}_{k},
 \end{equation*}
 where $\mathbb{P}_{k}$ stands for the orthogonal projection
$$
\mathbb{P}_kf=\sum_{|\alpha|=k}(f,\psi_{\alpha})_{L^2(\mathbb{R}_v)}\psi_{\alpha}.
$$
In particular,
\begin{equation*}
 \psi_{0}(v)=\frac{1}{(2\pi)^{\frac{3}{4}}}e^{-\frac{|v|^{2}}{4}}=\mu^{1/2}(v),
 \end{equation*}
where $\mu(v)$ is the Maxwellian distribution.    Setting
\begin{equation}\label{H3b}
A_{\pm,j}=\frac{v_{j}}{2}\mp\partial_j,\quad 1\leq j\leq 3,
\end{equation}
we have
\begin{equation*}
\psi_{\alpha}=\frac{1}{\sqrt{\alpha_{1}!\alpha_{2}!\alpha_{3}!}}A^{\alpha_{1}}_{+,1}A^{\alpha_{2}}_{+,2}A^{\alpha_{3}}_{+,3}\psi_{0},\quad  \alpha=(\alpha_{1},\alpha_{2},\alpha_{3})\in \mathbb{N}^{3},
\end{equation*}
and
\begin{equation}\label{H4}
A_{+,j}\psi_{\alpha}=\sqrt{\alpha_{j}+1}\psi_{\alpha+e_{j}},\quad A_{-,j}\psi_{\alpha}=\sqrt{\alpha_{j}}\psi_{_{\alpha-e_{j}}}
(=0 \, if\,  \alpha_{j}=0),
\end{equation}
where $(e_{1},e_2,e_{3})$ stands for the canonical basis of $\mathbb{R}^{3}$.  For more details of the Hermite functions, we can refer to \cite{MPX} and the reference theorem.

\subsection*{Gelfand-Shilov space}
The symmetric Gelfand-Shilov space $S^{\nu}_{\nu}(\mathbb{R}^3)$ can be characterized through the decomposition
into the Hermite basis $\{\psi_{\alpha}\}_{\alpha\in\mathbb{N}^3}$ and the harmonic oscillator $\mathcal{H}=-\triangle +\frac{|v|^2}{4}$.
For more details, see Theorem 2.1 in \cite{GPR}
\begin{align*}
f\in S^{\nu}_{\nu}(\mathbb{R}^3)&\Leftrightarrow\,f\in C^\infty (\mathbb{R}^3),\exists\, \tau>0, \|e^{\tau\mathcal{H}^{\frac{1}{2\nu}}}f\|_{L^2}<+\infty;\\
&\Leftrightarrow\, f\in\,L^2(\mathbb{R}^3),\exists\, \epsilon_0>0,\,\,\Big\|\Big(e^{\epsilon_0|\alpha|^{\frac{1}{2\nu}}}(f,\,\psi_{\alpha})_{L^2}\Big)_{\alpha\in\mathbb{N}^3}\Big\|_{l^2}<+\infty;\\
&\Leftrightarrow\,\exists\,C>0,\,A>0,\,\,\|(-\triangle +\frac{|v|^2}{4})^{\frac{k}{2}}f\|_{L^2(\mathbb{R}^3)}\leq AC^k(k!)^{\nu},\,\,\,k\in\mathbb{N}
\end{align*}
where
$$\psi_{\alpha}(v)=\phi_{\alpha_1}(v_1)\phi_{\alpha_2}(v_2)\phi_{\alpha_3}(v_3),\,\,\alpha\in\mathbb{N}^3,$$
and for $x\in\mathbb{R}$,
$$\phi_{n}(x)=\frac{(-1)^n}{\sqrt{2^nn!\pi}}e^{\frac{x^2}{2}}\frac{d^n}{dx^n}(e^{-x^2})
=\frac{1}{\sqrt{2^nn!\pi}}\Big(x-\frac{d}{dx}\Big)^n(e^{-\frac{x^2}{2}}).$$
For the harmonic oscillator $\mathcal{H}=-\triangle +\frac{|v|^2}{4}$ of 3-dimension and $s>0$, we have
\begin{equation}\label{H5}
\mathcal{H}^{\frac{k}{2}} \psi_{\alpha} = (\lambda_{\alpha})^{\frac{k}{2}}\psi_{\alpha},\,\, \lambda_{\alpha}=\sum^3_{j=1}(\alpha_j+\frac{1}{2}),\,\,k\in\mathbb{N},\,\alpha\in\mathbb{N}^3.
\end{equation}

In the following, we prove first a fundamental result in the $L^2(\mathbb{R}^3)$, which will be used to prove that the estimate \eqref{1.7+1} implies $g(t)\in S^1_1(\mathbb{R}^3)$ for any $t>0$.
\begin{lemma}\label{partial-1}
Let $u\in\mathcal{S}(\mathbb{R}^3)$, we have
\begin{equation*}
\|A_{-,j}u\|^2_{L^2(\mathbb{R}^3)}\leq\|A_{+,j}u\|^2_{L^2(\mathbb{R}^3)}.
\end{equation*}
\end{lemma}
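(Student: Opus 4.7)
The plan is to exploit two elementary algebraic facts about the creation/annihilation pair $A_{\pm,j}=\tfrac{v_j}{2}\mp\partial_j$: first, that $A_{+,j}$ and $A_{-,j}$ are formal adjoints of one another in $L^2(\mathbb{R}^3)$; and second, that their commutator equals $-1$ in the sense $[A_{+,j},A_{-,j}]=-1$ (this is already recorded in the excerpt, just before Proposition \ref{multiply-estimate}).

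First, I would verify the adjoint relation $A_{+,j}^{\ast}=A_{-,j}$. For $u,w\in\mathcal{S}(\mathbb{R}^3)$, an integration by parts on the $\partial_j$ term gives
\begin{equation*}
(A_{+,j}u,w)_{L^2}=\int_{\mathbb{R}^3}\Bigl(\tfrac{v_j}{2}u-\partial_j u\Bigr)w\,dv=\int_{\mathbb{R}^3}u\Bigl(\tfrac{v_j}{2}w+\partial_j w\Bigr)dv=(u,A_{-,j}w)_{L^2},
\end{equation*}
with no boundary terms thanks to the Schwartz decay. Taking $w=A_{+,j}u$ and $w=A_{-,j}u$ respectively yields the two identities
\begin{equation*}
\|A_{+,j}u\|_{L^2}^2=(A_{-,j}A_{+,j}u,u)_{L^2},\qquad \|A_{-,j}u\|_{L^2}^2=(A_{+,j}A_{-,j}u,u)_{L^2}.
\end{equation*}

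Next, I would use the commutator $A_{-,j}A_{+,j}-A_{+,j}A_{-,j}=1$ (equivalently $[A_{+,j},A_{-,j}]=-1$) to compare these two quadratic forms. Subtracting gives
\begin{equation*}
\|A_{+,j}u\|_{L^2}^2-\|A_{-,j}u\|_{L^2}^2=\bigl((A_{-,j}A_{+,j}-A_{+,j}A_{-,j})u,u\bigr)_{L^2}=(u,u)_{L^2}=\|u\|_{L^2}^2\geq 0,
\end{equation*}
which is exactly the desired inequality (and in fact a sharper identity).

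There is no real obstacle here: the argument is a two-line application of the harmonic-oscillator algebra and Schwartz-class integration by parts. The only point one must be careful about is to justify the vanishing of the boundary terms in the integration by parts, which is immediate on $\mathcal{S}(\mathbb{R}^3)$; if one later wishes to extend the inequality to a larger class, a density argument with appropriate cutoffs would be needed, but for $u\in\mathcal{S}(\mathbb{R}^3)$ as stated, the above suffices.
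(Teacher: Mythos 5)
Your proof is correct, but it takes a different route from the paper. The paper expands $u=\sum_{\alpha}u_{\alpha}\psi_{\alpha}$ in the Hermite basis, uses the ladder relations $A_{+,j}\psi_{\alpha}=\sqrt{\alpha_j+1}\,\psi_{\alpha+e_j}$ and $A_{-,j}\psi_{\alpha}=\sqrt{\alpha_j}\,\psi_{\alpha-e_j}$, and reads off $\|A_{-,j}u\|^2_{L^2}=\sum_{\alpha}|u_{\alpha}|^2\alpha_j\leq\sum_{\alpha}|u_{\alpha}|^2(\alpha_j+1)=\|A_{+,j}u\|^2_{L^2}$. You instead work purely with the operator algebra: the adjoint relation $A_{+,j}^{\ast}=A_{-,j}$ (your integration by parts is valid on $\mathcal{S}(\mathbb{R}^3)$, with the usual complex conjugate if one works over $\mathbb{C}$) together with $A_{-,j}A_{+,j}-A_{+,j}A_{-,j}=1$ gives the exact identity $\|A_{+,j}u\|^2_{L^2}-\|A_{-,j}u\|^2_{L^2}=\|u\|^2_{L^2}$. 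The two arguments are really the same computation seen from two sides: the paper's coefficient gap $(\alpha_j+1)-\alpha_j=1$ is precisely the commutator constant, and summing $\sum_{\alpha}|u_{\alpha}|^2\cdot 1=\|u\|^2_{L^2}$ recovers your identity. What your version buys is that it avoids invoking the Hermite ladder formulas \eqref{H4} (though the paper needs them anyway for the subsequent proposition) and makes the sharper quantitative statement explicit; what the paper's version buys is that it sets up exactly the spectral bookkeeping reused immediately afterwards in the estimate of $\|\mathcal{H}^{m/2}g\|_{L^2}$. Either proof is complete as written.
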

\begin{proof}
The family $(\psi_{\alpha})_{\alpha\in \mathbb{N}^3}$ is an orthonormal basis of $L^2(\mathbb{R}^3)$, we identify $u\in\mathcal{S}(\mathbb{R}^3)$ with
$$u=\sum_{\alpha\in \mathbb{N}^3}(u,\psi_{\alpha})_{L^2(\mathbb{R}^3)}\psi_{\alpha}.$$
Denote $u_{\alpha}=(u,\psi_{\alpha})_{L^2(\mathbb{R}^3)},$   we have
\begin{align*}
&A_{-,j}u
=\sum_{\alpha\in \mathbb{N}^3}u_{\alpha}\sqrt{\alpha_j}\psi_{\alpha-e_j},\\
&A_{+,j}u=\sum_{\alpha\in \mathbb{N}^3}u_{\alpha}\sqrt{\alpha_j+1}\psi_{\alpha+e_j}.
\end{align*}
By using orthogonal of the basis $\psi_{\alpha}$, one can verify that
\begin{align*}
\|A_{-,j}u\|^2_{L^2(\mathbb{R}^3)}
&=\sum_{\alpha\in \mathbb{N}^3}|u_{\alpha}|^2\alpha_j\\
&\leq\sum_{\alpha\in \mathbb{N}^3}|u_{\alpha}|^2\Big(\alpha_j+1\Big)
=\|A_{+,j}u\|^2_{L^2(\mathbb{R}^3)}.
\end{align*}
We end the proof of Lemma \ref{partial-1}.
\end{proof}
On the basis of Lemma \ref{partial-1}, we prove the following proposition.
\begin{proposition}
For $g\in \mathcal{S}(\mathbb{R}^3)$ ,
we have
$$\|\mathcal{H}^{\frac{m}{2}}g\|^2_{L^2(\mathbb{R}^3)}\leq \|\nabla^{m}_{\mathcal{H}_{+}}g\|^2_{L^2(\mathbb{R}^3)}.$$
\end{proposition}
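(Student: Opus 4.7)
The plan is to reduce everything to the Hermite basis $(\psi_\alpha)_{\alpha\in\mathbb{N}^3}$ and compare the two norms coefficient by coefficient, exactly in the spirit of the proof of Lemma \ref{partial-1}.

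First, I would expand $g=\sum_{\alpha\in\mathbb{N}^3} g_\alpha\psi_\alpha$ with $g_\alpha=(g,\psi_\alpha)_{L^2}$. Using the spectral identity \eqref{H5}, the left-hand side becomes diagonal:
\[
\|\mathcal{H}^{m/2}g\|_{L^2(\mathbb{R}^3)}^2=\sum_{\alpha\in\mathbb{N}^3}|g_\alpha|^2\Bigl(|\alpha|+\tfrac{3}{2}\Bigr)^{m}.
\]
For the right-hand side I would use the second equality in \eqref{multi-indices} together with the iterated creation rule
$A_+^{\beta}\psi_\alpha=\sqrt{(\alpha+\beta)!/\alpha!}\ \psi_{\alpha+\beta}$ which follows from \eqref{H4}. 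This yields
\[
\|\nabla_{\mathcal{H}_+}^{m}g\|_{L^2(\mathbb{R}^3)}^2
=\sum_{|\beta|=m}\frac{m!}{\beta!}\|A_+^{\beta}g\|_{L^2(\mathbb{R}^3)}^2
=\sum_{\alpha\in\mathbb{N}^3}|g_\alpha|^2\sum_{|\beta|=m}\frac{m!}{\beta!}\prod_{i=1}^3\frac{(\alpha_i+\beta_i)!}{\alpha_i!},
\]
after exchanging the two summations (which is justified for $g\in\mathcal{S}(\mathbb{R}^3)$).

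Thus it suffices to prove the pointwise numerical inequality
\[
\sum_{|\beta|=m}\frac{m!}{\beta!}\prod_{i=1}^3\frac{(\alpha_i+\beta_i)!}{\alpha_i!}\ \geq\ \Bigl(|\alpha|+\tfrac{3}{2}\Bigr)^{m}\qquad\forall\,\alpha\in\mathbb{N}^3.
\]
The rising factorial satisfies $(\alpha_i+\beta_i)!/\alpha_i!=(\alpha_i+1)(\alpha_i+2)\cdots(\alpha_i+\beta_i)\geq(\alpha_i+1)^{\beta_i}$ for every $\beta_i\geq 0$. Hence by the multinomial theorem
\[
\sum_{|\beta|=m}\frac{m!}{\beta!}\prod_{i=1}^3\frac{(\alpha_i+\beta_i)!}{\alpha_i!}
\geq \sum_{|\beta|=m}\frac{m!}{\beta!}\prod_{i=1}^3(\alpha_i+1)^{\beta_i}
=\bigl((\alpha_1+1)+(\alpha_2+1)+(\alpha_3+1)\bigr)^{m}=(|\alpha|+3)^{m},
\]
which is clearly $\geq (|\alpha|+3/2)^{m}$. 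Summing back in $\alpha$ with the weights $|g_\alpha|^2$ gives the proposition.

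The only mildly delicate point is the combinatorial inequality in the last display; everything else is bookkeeping through the Hermite expansion. An alternative route, if one wanted to avoid the multinomial computation, would be to iterate Lemma \ref{partial-1} together with the commutation identity $\sum_k A_{-,k}A_{+,k}=\mathcal{H}+3/2$ (which shows the base case $m=1$ directly), but the Hermite-basis argument above is the cleanest.
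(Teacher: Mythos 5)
Your proof is correct, and it takes a genuinely different route from the paper's. The paper argues by induction on $m$: it first establishes the case $m=1$ from the identity $\mathcal{H}=\tfrac12\sum_j(A_{+,j}A_{-,j}+A_{-,j}A_{+,j})$ together with Lemma \ref{partial-1}, then for the inductive step it commutes one factor $A_{+,j}$ past $\nabla^{m-1}_{\mathcal{H}_+}$, applies the induction hypothesis to $A_{+,j}g$, and finishes with a one-step Hermite-basis computation showing $\sum_j\|\mathcal{H}^{(m-1)/2}A_{+,j}g\|^2=\sum_\alpha|g_\alpha|^2(|\alpha|+\tfrac52)^{m-1}(|\alpha|+3)\ge\sum_\alpha|g_\alpha|^2(|\alpha|+\tfrac32)^m$. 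You instead diagonalize both sides in the Hermite basis in one shot and reduce the whole statement to the pointwise combinatorial inequality $\sum_{|\beta|=m}\frac{m!}{\beta!}\prod_i\frac{(\alpha_i+\beta_i)!}{\alpha_i!}\ge(|\alpha|+\tfrac32)^m$, which you settle via the rising-factorial bound and the multinomial theorem; all the individual ingredients (the formula $A_+^\beta\psi_\alpha=\sqrt{(\alpha+\beta)!/\alpha!}\,\psi_{\alpha+\beta}$, the interchange of sums for Schwartz $g$, the empty-product case $\beta_i=0$) check out. Your version avoids the induction and Lemma \ref{partial-1} entirely and in fact yields the slightly sharper lower bound $\sum_\alpha|g_\alpha|^2(|\alpha|+3)^m$; the paper's version is more modular, reusing Lemma \ref{partial-1} and keeping each step to a single application of the creation rule \eqref{H4}, at the cost of carrying an induction. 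Either argument is acceptable here.
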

\begin{proof}
In fact, by using  \eqref{H3}, one can find that,
$$\mathcal{H}=-\Delta+\frac{|v|^2}{4}=\frac{1}{2}\sum^3_{j=1}\left(A_{+,j}A_{-,j}+A_{-,j}A_{+,j}\right).$$
Then we have
\begin{align*}
\|\mathcal{H}^{\frac{1}{2}}g\|^2_{L^2(\mathbb{R}^3)}
&=\frac{1}{2}\sum^3_{j=1}\left(\|A_{+,j}g\|^2_{L^2(\mathbb{R}^3)}+\|A_{-,j}g\|^2_{L^2(\mathbb{R}^3)}\right)\\
&\leq\sum^3_{j=1}\|A_{+,j}g\|^2_{L^2(\mathbb{R}^3)}=\|\nabla_{\mathcal{H}_{+}}g\|^2_{L^2(\mathbb{R}^3)}
\end{align*}
where we use the fact $\|A_{-,j}g\|_{L^2(\mathbb{R}^3)}\leq\|A_{+,j}g\|_{L^2(\mathbb{R}^3)}$ in  Lemma \ref{partial-1}.
For $\forall m\in\mathbb{N}^+$ with $m\ge2$, we can deduce by induction that
\begin{equation}\label{f1}
\begin{split}
&\|\nabla^m_{\mathcal{H}_{+}}g\|^2_{L^2(\mathbb{R}^3)}=\sum^3_{j=1}\|A_{+,j}\nabla^{m-1}_{\mathcal{H}_{+}}g\|^2_{L^2(\mathbb{R}^3)}\\
&=\sum^3_{j=1}\|\nabla^{m-1}_{\mathcal{H}_{+}}A_{+,j}g\|^2_{L^2(\mathbb{R}^3)}\geq \sum^3_{j=1}\|\mathcal{H}^{\frac{m-1}{2}}A_{+,j}g\|^2_{L^2(\mathbb{R}^3)}.
\end{split}
\end{equation}
By using the identity
$g=\sum_{\alpha\in \mathbb{N}^3}g_{\alpha}\psi_{\alpha},$
where $g_{\alpha}=(g,\psi_{\alpha})_{L^2(\mathbb{R}^3)}$ and \eqref{H4}-\eqref{H5}, we have
 $$\mathcal{H}^{\frac{m-1}{2}}A_{+,j}g=\sum_{\alpha\in \mathbb{N}^3}g_{\alpha}\left(|\alpha|+\frac{5}{2}\right)^{\frac{m-1}{2}}\sqrt{\alpha_j+1}\psi_{\alpha+e_j}.$$
It implies that,
\begin{equation}\label{f2}
\begin{split}
&\sum^3_{j=1}\|\mathcal{H}^{\frac{m-1}{2}}A_{+,j}g\|^2_{L^2(\mathbb{R}^3)}\\
&=
\sum^3_{j=1}\sum_{\alpha\in \mathbb{N}^3}|g_{\alpha}|^2\left(|\alpha|+\frac{5}{2}\right)^{m-1}(\alpha_j+1)\\
&\geq\sum_{\alpha\in \mathbb{N}^3}|g_{\alpha}|^2\left(|\alpha|+\frac{3}{2}\right)^{m}=\|\mathcal{H}^{\frac{m}{2}}g\|^2_{L^2(\mathbb{R}^3)}.
\end{split}
\end{equation}
Substituting the result \eqref{f2} into \eqref{f1}, we conclude that
\begin{align*}
\|\mathcal{H}^{\frac{m}{2}}g\|^2_{L^2(\mathbb{R}^3)}\leq\|\nabla^m_{\mathcal{H}_{+}}g\|^2_{L^2(\mathbb{R}^3)}.
\end{align*}
\end{proof}
Then the result of Theorem \ref{trick} implies that the solution $g$ to the Cauchy problem \eqref{eq-1} enjoys the Gelfand-Shilov $S^1_1(\mathbb{R}^3)$ smoothing effect, in fact, we have proved, there exists $c_0>0$,
such that
$$
e^{c_0\tilde{t}^{\frac 12}\mathcal{H}^{\frac 12}}g(t)\in L^\infty([0, +\infty[; L^2(\mathbb{R}^3)).
$$

\bigskip
\noindent {\bf Acknowledgements.}
The first author is supported by the Fundamental
Research Funds for the Central Universities of China, South-Central University for Nationalities (No. CZT20007). The second author is supported by the NSFC (No.12031006) and the Fundamental
Research Funds for the Central Universities of China.

\end{document}